\documentclass[12pt]{amsart}
\usepackage{amsthm}
\usepackage{amsmath,amsxtra,amssymb,amsfonts,latexsym, mathrsfs, dsfont,bm,mathtools}
\usepackage[initials,alphabetic]{amsrefs}
\usepackage[usenames]{color}
\usepackage[all]{xy}
\usepackage{hyperref}

\allowdisplaybreaks[1]

\newtheorem{lemma}{Lemma}[section]
\newtheorem{prop}[lemma]{Proposition}
\newtheorem{theorem}[lemma]{Theorem}
\newtheorem{cor}[lemma]{Corollary}

\theoremstyle{remark}
\newtheorem{rem}[lemma]{Remark}

\theoremstyle{definition}
\newtheorem{defn}[lemma]{Definition}
\newtheorem{exam}[lemma]{Example}

\numberwithin{equation}{section}

\newcommand{\lge}{\langle}
\newcommand{\rge}{\rangle}

\newcommand{\fx}{\mathcal{F}}

\newcommand{\lx}{\mathcal{L}}

\newcommand{\nx}{\mathcal{N}}

\newcommand{\ux}{\mathcal{U}}

\newcommand{\nz}{\mathbb{N}}
\newcommand{\rz}{\mathbb{R}}
\newcommand{\cz}{\mathbb{C}}
\newcommand{\ez}{\mathbb{E}}
\newcommand{\fz}{\mathbb{F}}
\newcommand{\tz}{\mathbb{T}}
\newcommand{\zz}{\mathbb{Z}}
\newcommand{\pz}{\mathbb{P}}

\newcommand{\Ga}{\Gamma}
\newcommand{\Om}{\Omega}

\newcommand{\al}{\alpha}
\newcommand{\de}{\delta}
\newcommand{\si}{\sigma}
\newcommand{\ga}{\gamma}
\newcommand{\la}{\lambda}
\newcommand{\bt}{\beta}

\newcommand{\om}{\omega}

\newcommand{\eps}{\varepsilon}
\newcommand{\8}{\infty}
\newcommand{\td}{\widetilde}
\mathchardef\dash="2D

\newcommand{\Aut}{\operatorname{Aut}}
\newcommand{\sgn}{\operatorname{sgn}}
\newcommand{\Dom}{\operatorname{Dom}}
\newcommand{\Fix}{\operatorname{Fix}}
\newcommand{\Ent}{\operatorname{Ent}}
\newcommand{\dd}{\mathrm{d}}
\newcommand{\spec}{\operatorname{spec}}

\addtolength{\parskip}{1.05ex}
 \oddsidemargin0cm
 \evensidemargin0cm
  \textwidth16.1cm

\begin{document}
  \title{Subgaussian 1-cocycles on discrete groups}
\author{Marius Junge}
\date{\today}
\address{Department of Mathematics, University of Illinois, Urbana, IL 61801}
\email{mjunge@illinois.edu}
\author{Qiang Zeng}
\address{Department of Mathematics, University of Illinois, Urbana, IL 61801}
\curraddr{Center of Mathematical Sciences and Applications, Harvard University, Cambridge, MA 02138}
\email{qzeng@cmsa.fas.harvard.edu}
\thanks{The first author was partially supported by NSF Grant DMS-1201886}
\subjclass[2010]{46L53, 60E15, 22D25}
\keywords{Decoupling, (noncommutative) Burkholder inequality, (noncommutative) Poincar\'e inequality, (noncommutative) transportation inequality,  noncommutative $L_p$ spaces, $\Ga_2$-criterion, group von Neumann algebras, 1-cocycles, spectral gap}
\maketitle

\begin{abstract}
 We prove the $L_p$ Poincar\'e inequalities with constant $C\sqrt{p}$ for $1$-cocycles on countable discrete groups under Bakry--Emery's $\Ga_2$-criterion. These inequalities determine an analogue of subgaussian behavior for 1-cocycles. Our theorem improves some of our previous results in this direction, and in particular implies Efraim and Lust-Piquard's Poincar\'e type inequalities for the Walsh system. The key new ingredient in our proof is a decoupling argument. As complementary results, we also show that the spectral gap inequality implies the $L_p$ Poincar\'e inequalities with constant $C{p}$ under some conditions in the noncommutative setting. New examples which satisfy the $\Ga_2$-criterion are provided as well.
\end{abstract}

\section{Introduction}
Subgaussian behavior of random variables and stochastic processes is an important topic in probability theory. It is closely related to the concentration of measure phenomenon; see e.g. \cite{Ver}. Functional inequalities -- including log-Sobolev inequality, Poincar\'e inequality, transportation-entropy inequalities -- have played a critical role in the development of this theory in the last two decades; see \cite{Ta96, BG, OV, BGL} and the references therein for the extensive literature. More recently, this theory has been applied to study random matrices; see e.g. \cite{Ver, Gui}. In this paper, we want to connect this well-known theory in classical probability to 1-cocycles on groups, which is important in both group theory (Kazhdan's Property (T), the Haagerup property, etc.) and operator algebras; see e.g. \cite{BO}. We are interested in determining a class of 1-cocycles which satisfy an analogue of the subgaussian growth condition via Poincar\'e type inequalities. Recall that a random variable $X$ is subgaussian if and only if $(\ez|X|^p)^{1/p}\le C\sqrt{p}$ for all $p\ge 1$. Here and in the following we use $C, C',C_1$, etc.\! to denote absolute constants which may vary from line to line. To generalize this notion, we consider the following $L_p$ Poincar\'e inequalities for a probability space $(\rz^d,\mu)$,
\begin{equation}\label{pcrc}
\|f-\ez_\mu(f)\|_p \le C\sqrt{p}\|\nabla f\|_p
\end{equation}
for all $p\ge 2$ and differentiable $f\in L_\8(\rz^d, \mu)$. Observe that \eqref{pcrc} resembles subgaussian growth of random variables. In particular, choosing $f(x)=x$ when $d=1$ we recover the classical definition except for $1\le p<2$.

As a classical example, the Gaussian measure on $\rz^d$ satisfies \eqref{pcrc} due to Pisier \cite{Pis86}; see \cite{JZ12} for another proof. More classical examples satisfying \eqref{pcrc} can be found in \cite{AW13} and the references therein. In fact, one way to generalize \eqref{pcrc} is via the semigroup theory of operators. The analogue of gradient in this context is Meyer's ``carr\'e du champs''. We can even go further and consider an analogue of \eqref{pcrc} in a noncommutative $W^*$ probability space. Recall from \cite{VDN} that $(\nx,\tau)$ is a $W^*$ probability space if $\nx$ is a von Neumann algebra and $\tau$ is a normal state. We also assume that $\nx$ is finite and $\tau$ is tracial and faithful. Throughout we always work with a \emph{standard} semigroup $T_t=e^{-tA}$ acting on $(\nx,\tau)$ with generator $A$. Here a standard semigroup $(T_t)_{t\ge0}$ is pointwise $\si$-weak (weak$*$) continuous such that every $T_t$ is normal unital completely positive and symmetric on $L_2(\nx, \tau)$. We define the gradient form associated to $A$ (Meyer's ``carr\'e du champs'') as
\[
\Ga^A(f_1,f_2)=\frac12[A(f_1^*)f_2+f_1^*A(f_2)-A(f_1^*f_2)]
\]
for $f_1,f_2$ in a suitable involutive subalgebra of the domain of the generator, which is supposed to exist. In the following, we may simply write $\Ga$ for $\Ga^A$ if the generator under consideration is clear. Let ${\rm Fix}=\{x\in \nx:T_t x=x ,\forall t>0\}$ be the fixed point algebra of $T_t$. It was shown in \cite{JX07} that ${\rm Fix}$ is a von Neumann subalgebra of $\nx$. Thus there exists a unique conditional expectation $E_{\Fix}: \nx\to {\rm Fix}$. Recall that the noncommutative $L_p$ space $L_p(\nx,\tau)$ is defined as the closure of $\nx$ in the norm $\|\cdot\|_p$ given by $\|x\|_p=[\tau((x^*x)^{p/2})]^{1/p}$ for $0<p<\8$ and $\|x\|_\8=\|x\|$ for $p=\8$, where $\|\cdot\|$ is the operator norm. We usually write $L_p(\nx)$ for short. It is well known that $L_p(\nx)$ is a Banach space for $1\le p\le \8$; see \cite{PX03} for more details.
\begin{defn}\label{def1}
 A standard semigroup $T_t$ acting on $(\nx,\tau)$ is said to be subgaussian if the following $L_p$ Poincar\'e inequalities
\begin{equation}\label{pcr2}
 \|f- E_{\Fix}(f)\|_p \le C\sqrt{p}\max\{\|\Ga^A(f,f)^{1/2}\|_p,~ \|\Ga^A(f^*,f^*)^{1/2}\|_p\}.
\end{equation}
hold for $2\le p<\8$ and $f$ in a suitable involutive subalgebra of the domain of the generator.
\end{defn}
For simplicity, in the following we may say the above inequality holds for all $f\in \nx$, since it is automatically true if the right-hand side is infinity. Since the gradient form $\Ga$ coincides with the modulus of the gradient if $-A$ is the Laplacian of a Euclidean space, \eqref{pcr2} is indeed a generalization of \eqref{pcrc}. It is known that for classical diffusion semigroups, log-Sobolev inequality implies \eqref{pcr2}; see \cite{AS94} and also \cite{AW13}. Efraim and Lust-Piquard proved that \eqref{pcr2} holds for Walsh systems and CAR algebras in \cite{ELP}. In fact, we started to study the subgaussian behavior \eqref{pcr2} of semigroups acting on a general noncommutative $W^*$ probability space $(\nx,\tau)$ in \cite{JZ12}. It was shown in \cite{Z13} that the group measure space $L_\8(\rz^d,\ga_d)\rtimes G$ satisfies \eqref{pcr2}, where the action and the gaussian measure $\ga_d$ are associated to an orthogonal representation of $G$ on a real Hilbert space, and the semigroup acting on $L_\8(\rz^d,\ga_d)\rtimes G$ is a natural extension of the Ornstein-Uhlenbeck semigroup on $L_\8(\rz^d,\ga_d)$. A remarkable consequence of \eqref{pcr2} is that one can get concentration inequalities, exponential integrability and transportation cost inequalities; see \cite{ELP, Z13}. Our goal here is to prove \eqref{pcr2} for group von Neumann algebras under some conditions on the 1-cocycles of groups and to elaborate on the relationship between the spectral gap of $A$ and $L_p$ Poincar\'e inequalities for semigroups acting on a $W^*$ probability space $(\nx,\tau)$.

Let us be more precise. Let $G$ be a countable discrete group.  Recall that a (generic) conditionally negative length (or cn-length for short) function $\psi$ on $G$ determines a 1-cocycle $b_\psi$ on $G$ with coefficients in an orthogonal representation $(\al, H_\psi)$ of $G$, and vice versa. Let $\la: G\to B(\ell_2(G))$ be the left regular representation given by $\la_g(\de_h)=\de_{gh}$ for $g,h\in G$, where $\de_h$'s form a unit vector basis of $\ell_2(G)$. The group von Neumann algebra $LG$ is the closure of linear span of $\la(G)$ in the weak operator topology. It is well known that $LG$ admits a canonical normal faithful tracial state given by $\tau(f)=\lge \de_e,f\de_e\rge $ for $f\in LG$, where $e$ is the identity element of $G$. Consider the semigroup $T_t$ acting on $LG$ defined by $T_t\la(g)=e^{-t\psi(g)}\la(g)$ for $g\in G$. Then $(T_t)_{t\ge0}$ is a standard semigroup on $(LG,\tau)$. Thus $(T_t)_{t\ge0}$ extends to a strongly continuous semigroup of contractions on $L_2(LG)\cong \ell_2(G)$ and the generator is given by $A\la(g)=\psi(g)\la(g)$.
We say that a 1-cocycle $b_\psi$ on $G$ with coefficients in the orthogonal representation $\al: G\to H_\psi$ is \emph{subgaussian} if the semigroup given by $T_t\la(g)=e^{-t\psi(g)}\la(g)$ is subgaussian in the sense of Definition \ref{def1}, i.e.,
 \begin{equation}\label{pcr21}
 \|f- E_{\Fix}(f)\|_p \le C\sqrt{p}\max\{\|\Ga^\psi(f,f)^{1/2}\|_p,~ \|\Ga^\psi(f^*,f^*)^{1/2}\|_p\}.
 \end{equation}
holds for all $f\in LG$ and $2\le p<\8$.

 For readers who are not familiar with von Neumann algebras, \eqref{pcr21} can be formulated in a more algebraic way, i.e.,
 \[
 \|f\|_p \le C\sqrt{p}\max\{\|\Ga^\psi(f,f)^{1/2}\|_p, ~ \|\Ga^\psi(f^*,f^*)^{1/2}\|_p\}
 \]
 for $f\in \cz G$, $f=\sum_{s\in G, \psi(s)\neq 0} a_s s$, where $\cz G$ is the group algebra of $G$ (thus $f$ is a finite linear combination),
\[
\Ga^\psi(f,f) =\sum_{s,t\in G}\bar{a}_s a_t K(s,t)s^{-1}t
\]
and $K(s,t)$ is the Gromov form given by
\[
K(s,t) =\frac12(\psi(s)+\psi(t)-\psi(s^{-1}t)), \quad s, t\in G.
\]
Here $\psi(s)\neq 0$ in the linear combination implies that $E_{\Fix} f =0$.  We remark that  $\Ga^\psi(f,f)$ and $\Ga^\psi(f^*,f^*)$ are not equal in general because of noncommutativity. It is clear that in this formulation \eqref{pcr21} is really a condition on the 1-cocycle (or the cn-length function) and involves no probability theory or semigroups of operators. However, the only way we know to prove such inequalities is to use probability in an efficient way. To state our main results, we need to introduce the well-known $\Ga_2$-criterion due to Bakry--Emery. Recall that
\[
\Ga_2^A(f_1,f_2)=\frac12[\Ga^A(Af_1,f_2)+\Ga^A(f_1,Af_2)-A\Ga^A(f_1,f_2)]
\]
whenever $f_1$ and $f_2$ are in a suitable involutive subalgebra of the domain of the generator.
\begin{theorem}\label{mthe}
  Let $G$ be a countable discrete group with cn-length function $\psi$ and $LG$ its group von Neumann algebra. Suppose $f\in LG$ satisfies $\Ga_2^\psi(f,f) \ge \al \Ga^\psi(f,f)$ for some $\al>0$. Then for $2\le p<\8$,
  \begin{equation}\label{pcr3}
      \|f-E_{\Fix} f\|_p\le C \sqrt{p/\al} \max\{\|\Ga^\psi(f,f)^{1/2}\|_p, \|\Ga^\psi(f^*,f^*)^{1/2}\|_p\}.
  \end{equation}
\end{theorem}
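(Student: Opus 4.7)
The plan is to deduce \eqref{pcr3} from the noncommutative Burkholder inequality after a decoupling step, with the $\Ga_2$-criterion used to control the conditional square functions. Assume for convenience that $E_{\Fix}f=0$. The starting point is the spectral identity
\[
f \;=\; \int_0^\8 A T_t f\,\dd t,
\]
which after a dyadic discretization rewrites $f$ as a telescoping sum $\sum_k(T_{s_{k-1}}f - T_{s_k}f)$ over $s_k=2^{-k}$ (with the usual truncation at $k\to\pm\8$). The goal is to realize these increments as martingale differences on a larger algebra, so that the noncommutative Burkholder/Rosenthal inequality of Junge--Xu applies and contributes the desired $\sqrt{p}$ factor.

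The device that upgrades the semigroup increments to genuine martingale differences is the Cipriani--Sauvageot derivation attached to the $1$-cocycle $b_\psi$: there is a derivation $\delta:LG\to\mx$ into an $LG$-bimodule $\mx$, built from the Gaussian functor on $H_\psi$ tensored with $LG$, such that $A=\delta^*\delta$ and $\Ga^\psi(f,f)=\delta(f)^*\delta(f)$. Writing $T_{s_{k-1}}f-T_{s_k}f=\int_{s_k}^{s_{k-1}}\delta^*\delta T_t f\,\dd t$ and inserting independent copies of the Gaussian increments at each dyadic scale produces a sequence which is adapted to a product filtration and orthogonal to its past. This is the \emph{decoupling} step advertised in the abstract.

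Once the decoupled martingale is in place, we apply the noncommutative Burkholder inequality to bound $\|f\|_p$ by $C\sqrt{p}$ times the maximum of the row and column conditional square functions of the increments. Using the intertwining relation for $\delta$ together with the operator Jensen inequality, each square function reduces to a weighted integral of the form $\int_0^\8 e^{-2\al t}T_t\Ga^\psi(f,f)\,\dd t$ and its row analogue, where the $\Ga_2$-criterion enters through the Bakry--Emery gradient bound
\[
\Ga^\psi(T_t f,T_t f)\;\le\;e^{-2\al t}\,T_t\Ga^\psi(f,f).
\]
Summing the resulting geometric series then delivers the constant $C\sqrt{p/\al}$ and produces the row/column asymmetry $\max\{\|\Ga^\psi(f,f)^{1/2}\|_p,\|\Ga^\psi(f^*,f^*)^{1/2}\|_p\}$ on the right-hand side.

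The main obstacle is the decoupling step itself: one must replace the deterministic semigroup differences $T_{s_{k-1}}f-T_{s_k}f$ by differences involving independent copies of the underlying Gaussian variables, without losing control of both the row \emph{and} column square functions simultaneously, since the asymmetry between $\Ga^\psi(f,f)$ and $\Ga^\psi(f^*,f^*)$ is intrinsic to the noncommutative setting and is responsible for the $\max$ on the right of \eqref{pcr3}. Once the decoupled martingale is constructed, the Bakry--Emery gradient bound, the noncommutative Burkholder inequality, and summation of a geometric series combine in a routine manner to yield \eqref{pcr3}.
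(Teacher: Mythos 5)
Your high-level ingredients match the paper's (a Gaussian dilation attached to the cocycle, a decoupling step, a Rosenthal-type inequality contributing $\sqrt{p}$, the gradient bound $\Ga(T_tf,T_tf)\le e^{-2\al t}T_t\Ga(f,f)$, and a final integration), but the central construction is not carried out and, as described, has a genuine gap. The telescoping sum $\sum_k(T_{s_{k-1}}f-T_{s_k}f)$ consists of deterministic elements of $LG$; it is not a martingale in any filtration, and ``inserting independent copies of the Gaussian increments at each dyadic scale'' is not a defined operation on it. What the paper actually does is build the dilation $\pi_t(\la(g))=e^{i\bt_t(b_\psi(g))}\rtimes\la(g)$ into $L_\8(\Om)\rtimes G$, form the genuine martingale $v_t(x)=\pi_tT_{L-t}x$, write $v_L(x)-v_0(x)$ via Ito's formula as a stochastic integral against the Brownian motions $B^k$, and discretize \emph{in the time variable of the Brownian motion}, so that each increment has the special transform form $f(\bt_{t_k})\,dB^j_{t_k}$ with $dB^j_{t_k}$ independent of the past. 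Decoupling (via random selectors, \`a la de la Pe\~na) then replaces $dB^j_{t_k}$ by increments of an independent copy $\td{B}$, and only at that point does the noncommutative \emph{Rosenthal} inequality for sums with (conditionally) independent entries apply with constant $\sqrt{p}$ on the square-function term. This is the crux: the general noncommutative Burkholder inequality with $A(p)=O(\sqrt{p})$, which you invoke directly after decoupling, is precisely what is \emph{not} known in the noncommutative setting; the decoupling exists exactly to reduce to the independent-increment situation where Rosenthal gives $\sqrt{p}$. Your proposal does not explain how your dyadic/derivation decomposition achieves this reduction.

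A second, related gap: the Rosenthal bound carries a diagonal term of order $p\,(\sum_k\|dx_k\|_p^p)^{1/p}$, with constant $p$ rather than $\sqrt{p}$. In the paper this term disappears because the martingale is driven by Brownian motion, hence has almost uniformly continuous paths and vanishing variation in the continuous-time limit, i.e.\ $\|v(x)\|_{h_p^d([0,L])}=0$. With a decomposition over finitely many dyadic semigroup scales there is no such limit, and you would be left with an unremovable $Cp$ contribution. Your treatment of the square function itself (Bakry--Emery gradient bound, then $\int_0^L e^{-2\al t}\,\dd t\le 1/(2\al)$, then $L\to\8$) is correct and matches the paper's endgame, including the source of the row/column maximum.
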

Note that Theorem \ref{mthe} is a result for individual elements. As a property of the group von Neumann algebra, we hope to show that \eqref{pcr3} holds for all $f\in LG$. Recall Bakry--Emery's $\Ga_2$-criterion \cite{BE85}: There exists $\al>0$ such that $\Ga_2^\psi(f,f)\ge \al \Ga^\psi(f,f)$ for all $f\in LG$ for which both $\Ga_2^\psi(f,f)$ and $\Ga^\psi(f,f)$ are well-defined. As observed in \cite{JZ12}, in our context this condition is equivalent to the algebraic condition that $\Ga_2^\psi-\al \Ga^\psi$ is a positive semidefinite form. The domain of $\Ga_2$ is typically smaller than that of $\Ga$. Therefore, it is possible that $\Ga_2(f,f)$ is not well defined for some element $f\in LG$ while \eqref{pcr3} still holds for this element. However, by \cite{JZ12}*{Corollary 4.8}, we know that \eqref{pcr3} always holds for all $f\in LG$ provided $\Ga_2(f,f)\ge \al\Ga(f,f)$ for all $f\in\cz G$, because $\cz G$ is a weakly dense subalgebra of $LG$. Let us record this as the following result.
\begin{cor}\label{sgau}
Suppose the $\Ga_2$-criterion holds for the cn-length function $\psi$ on a group $G$. Then we have the $L_p$ Poincar\'e inequalities \eqref{pcr3} for all $f\in LG$ and $2\le p<\8$ whenever the right-hand side of \eqref{pcr3} is finite. Therefore, the 1-cocycle $b_\psi$ is subgaussian.
\end{cor}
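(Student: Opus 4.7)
The plan is to reduce to Theorem \ref{mthe} by an approximation argument, using the semigroup $T_t$ as a regularizer. Since the $\Ga_2$-criterion is assumed to hold globally, the pointwise hypothesis of Theorem \ref{mthe} is automatic for every $f$ lying in a sufficiently nice domain; the only real task is to handle arbitrary $f\in LG$ whose right-hand side in \eqref{pcr3} is finite but which may not \emph{a priori} lie in the domain of $A^2$.

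First, for $f$ in the group algebra $\cz G$, both $\Ga^\psi(f,f)$ and $\Ga_2^\psi(f,f)$ are finite sums of elements of $LG$, so the global $\Ga_2$-criterion delivers $\Ga_2^\psi(f,f)\ge\al\,\Ga^\psi(f,f)$ and Theorem \ref{mthe} applies directly. This establishes \eqref{pcr3} on the dense subalgebra $\cz G$. Next, for arbitrary $f\in LG$ with finite right-hand side, I would replace $f$ by $T_t f$ with $t>0$: since $T_t$ sends $LG$ into analytic vectors of $A$, both $\Ga^\psi(T_t f, T_t f)$ and $\Ga_2^\psi(T_t f, T_t f)$ are well-defined in $LG$, the $\Ga_2$-criterion applies pointwise, and Theorem \ref{mthe} gives \eqref{pcr3} with $T_t f$ in place of $f$.

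It then remains to let $t\to 0^+$. On the left-hand side, strong continuity of $T_t$ on $L_p(LG)$ (standard for tracially symmetric Markov semigroups via Stein interpolation between $L_1$ and $L_\8$), together with the identity $E_{\Fix}T_t=E_{\Fix}$, yields $\|T_t f-E_{\Fix}(T_t f)\|_p\to\|f-E_{\Fix}f\|_p$. On the right-hand side, one needs the gradient contractivity
\[
\Ga^\psi(T_t f, T_t f)\le e^{-2\al t}\, T_t\bigl(\Ga^\psi(f,f)\bigr),
\]
which is obtained from the $\Ga_2$-criterion in the usual Bakry--Emery fashion, by differentiating $s\mapsto e^{-2\al s}T_{t-s}\bigl(\Ga^\psi(T_s f, T_s f)\bigr)$ and showing the derivative is non-positive. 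Combining this with Kadison--Schwarz (or simply the contractivity of $T_t$ on $L_{p/2}(LG)$) yields $\|\Ga^\psi(T_t f, T_t f)^{1/2}\|_p\le \|\Ga^\psi(f,f)^{1/2}\|_p$, and likewise with $f^*$ in place of $f$.

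The main technical obstacle is the gradient contractivity itself, since $\Ga^\psi$ is a module-valued form on which $T_t$ acts on both arguments simultaneously, and the noncommutativity prevents a naive scalar computation. In the present setting this is, however, quite tractable: the explicit formula $\Ga^\psi(\la(s),\la(t))=K(s,t)\la(s^{-1}t)$ makes $T_r\bigl(\Ga^\psi(T_s f, T_s f)\bigr)$ computable term by term in terms of the weights $e^{-(r+s)\psi(\cdot)}$ and the Gromov form, and the required monotonicity then reduces to the positivity statement that $\Ga_2^\psi-\al\Ga^\psi$ is a positive semidefinite form, which is precisely the algebraic reformulation of the $\Ga_2$-criterion noted after Theorem \ref{mthe}. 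Once this is verified, passing $t\to 0^+$ in the $T_t f$ version of \eqref{pcr3} and invoking lower semicontinuity of $\|\cdot\|_p$ on the right-hand side concludes the proof, so that $b_\psi$ is subgaussian in the sense of \eqref{pcr21}.
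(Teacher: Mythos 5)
Your proposal is correct and follows essentially the same route as the paper, which simply notes that $\cz G$ is weakly dense in $LG$ and delegates the regularization/limit argument verbatim to \cite{JZ12}*{Corollary 3.8}; the ingredients you spell out (applying Theorem \ref{mthe} on $\cz G$ or to $T_tf$, the gradient bound $\Ga^\psi(T_tf,T_tf)\le e^{-2\al t}T_t\Ga^\psi(f,f)$ from \cite{JZ12}*{Lemma 3.6}, contractivity of $T_t$ on $L_{p/2}$, and strong $L_p$-continuity on the left) are exactly those that citation encapsulates. One small slip: the auxiliary function in the Bakry--Emery interpolation should carry the weight $e^{+2\al s}$, i.e.\ one shows $s\mapsto e^{2\al s}T_{t-s}\bigl(\Ga^\psi(T_sf,T_sf)\bigr)$ is non-increasing; with your weight $e^{-2\al s}$ the monotonicity only yields the (useless) bound $\Ga^\psi(T_tf,T_tf)\le e^{2\al t}T_t\Ga^\psi(f,f)$.
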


Our motivation to study this problem comes from both noncommutative harmonic analysis and probability theory. In noncommutative harmonic analysis, Poincar\'e inequalities are closely related to noncommutative Riesz transform and smooth Fourier multiplier theory developed in \cite{JM10,JMP}. In probability theory, precise moment estimation of random variables could be the starting point of various results, including concentration and transportation inequalities.

Let us mention some interesting applications. As indicated in \cite{JZ12}, applying Theorem \ref{mthe} to the group $G=\zz_2\times\cdots\times \zz_2$, we recover the Poincar\'e type inequalities for the Walsh system due to Efraim and Lust-Piquard \cite{ELP}. By embedding the matrix algebra into the discrete Heisenberg group von Neumann algebra, we find subgaussian behavior for matrix algebras. Another immediate consequence of our main results is the following transportation type inequalities shown in \cite{JZ12,Z13}. Let us recall some notation. Let $T_t=e^{-tA}$ be a semigroup acting on a noncommutative probability space $(\nx,\tau)$ with generator $A$. Given $\tau$-measurable operators $\rho$ and $\si$, we define the following analogues of classical Wasserstein distances
\[
 Q_1(\rho,\si):=\sup\{|\tau(x\rho)/\tau(\rho) -\tau(x\si)/\tau(\si)|: x \text{ self-adjoint, } \|\Ga(x,x)\|_\8\le 1\}
\]
and
\[
 Q_{\phi}(\rho,\si)=\sup \{|\tau(x\rho)/\tau(\rho) -\tau(x\si)/\tau(\si)|: x \mbox{ self-adjoint, } \|\Ga(x,x)^{1/2}\|_{\phi}\le 1\}
\]
where $\|y\|_\phi=  \inf \{c>0: \tau[\phi(|y|/c)]\le 1\}$ and $\phi(t)=e^{t^2}-1$; see \cite{Z13} for a detailed discussion about these distances and their relationship to Rieffel's quantum metric spaces. For a $\tau$-measurable positive operator $\rho$, we define the entropy
\[
 \Ent(\rho)=\tau[\rho\ln(\rho/\tau(\rho))].
\]
\begin{cor}\label{tran}
  Suppose the $\Ga_2$-criterion holds for the cn-length function $\psi$ on a discrete group $G$. Then
 \[
Q_1(\rho,E_{\Fix}\rho)\le C\sqrt{\Ent(\rho)}
 \]
and
\[
 Q_{\phi}(\rho, E_{\Fix}\rho)\le C'\max\{\sqrt{\Ent(\rho)}, ~ \Ent(\rho)\}
\]
for all $\tau$-measurable positive operators $\rho$ affiliated to $LG$ with $\tau(\rho)=1$.
\end{cor}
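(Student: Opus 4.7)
The plan is to deduce both inequalities from Corollary \ref{sgau} via a standard Herbst-style dualization adapted to the tracial noncommutative setting. I first reduce the supremum in the definitions of $Q_1$ and $Q_\phi$: given an admissible self-adjoint $x$, set $y = x - E_{\Fix}(x) \in LG$. Since $E_{\Fix}\rho \in \Fix$ and $E_{\Fix}$ is the trace-preserving conditional expectation, its module property gives
\[
\tau(x\rho) - \tau(xE_{\Fix}\rho) = \tau(y\rho).
\]
Using the derivation picture $\Ga(f,g) = \de(f)^*\de(g)$ with $\Fix \subset \ker\de$, one has $\Ga(y,y) = \Ga(x,x)$, so any size hypothesis on $\Ga(x,x)^{1/2}$ transfers verbatim to $y$, and moreover $E_{\Fix}(y) = 0$.

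For the $Q_1$ estimate, the assumption $\|\Ga(x,x)^{1/2}\|_\8 \le 1$ combined with Corollary \ref{sgau} yields $\|y\|_p \le C\sqrt{p}$ for all $p \ge 2$. Expanding $\tau(e^{\la y})$ as a power series and applying Stirling converts this into a subgaussian exponential-moment bound $\log\tau(e^{\la y}) \le C_1 \la^2$ valid for all $\la \in \rz$. Applying the Gibbs variational inequality for the tracial state, valid for self-adjoint $h \in LG$ and positive $\rho$ with $\tau(\rho) = 1$,
\[
\tau(h\rho) \le \Ent(\rho) + \log\tau(e^h),
\]
to $h = \la y$ yields $\tau(y\rho) \le \Ent(\rho)/\la + C_1\la$. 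Optimizing over $\la > 0$ gives $\tau(y\rho) \le 2\sqrt{C_1\Ent(\rho)}$; taking the supremum over admissible $x$ produces the first claim.

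For the $Q_\phi$ estimate, the hypothesis $\|\Ga(x,x)^{1/2}\|_\phi \le 1$ with $\phi(t) = e^{t^2}-1$ only gives $\|\Ga(x,x)^{1/2}\|_p \le C\sqrt{p}$, so Corollary \ref{sgau} degrades to the subexponential estimate $\|y\|_p \le C' p$. The corresponding exponential-moment bound $\log\tau(e^{\la y}) \le C_2\la^2$ is now valid only for $|\la| \le c$. Re-running the variational argument and splitting the optimization over $\la$ into the regimes $\la < c$ (dominant when $\Ent(\rho) \le C_2 c^2$) and $\la = c$ (otherwise) produces the two-scale bound $\tau(y\rho) \le C_3 \max\{\sqrt{\Ent(\rho)}, \Ent(\rho)\}$.

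The main obstacles are the tracial noncommutative Gibbs variational principle for (possibly $\tau$-measurable) $\rho$ and the passage from $L_p$ moment growth to exponential/Orlicz integrability in the $L_p(LG,\tau)$ scale. Both are by now classical in noncommutative probability and are carried out in detail in \cite{JZ12} and \cite{Z13}, so this corollary is essentially a direct application of Corollary \ref{sgau} combined with those standard Herbst-type tools.
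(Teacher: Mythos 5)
Your proposal is correct and takes essentially the same route as the paper: the paper's proof consists precisely of invoking Corollary \ref{sgau} and then citing \cite{Z13}*{Theorem 1.3} and \cite{JZ12} (Proposition 3.14 and Corollary 3.19), which carry out exactly the Herbst-type dualization you reconstruct --- reduction to $y=x-E_{\Fix}(x)$, exponential moment bounds derived from the $\sqrt{p}$ (resp.\ $p$) growth of $\|y\|_p$, the tracial Gibbs variational principle, and optimization in $\la$. No discrepancy.
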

We remark that the constant of order $\sqrt{p}$ in our Poincar\'e inequalities is crucial to deduce these entropy bounds as observed in \cite{JZ12, Z13}. A constant of the order $p$, as obtained in Section 4, is not sufficient for such entropy bounds.

Let us now point out the connection of our results to some previous ones. As is well known, the major application of $\Ga_2$-criterion is to derive Gross' log-Sobolev inequality (LSI) under some mild condition; see \cite{BE85} and also the lecture notes \cite{GZ} for more details in this direction. However, as observed in \cite{JZ12}, this implication is not true in general non-diffusion setting where the sample paths are discontinuous; see e.g. \cite{BGL14, RY} for the definition of classical diffusion semigroups and processes. In particular, the diffusion property is characterized by the Leibnitz rule on the $\Ga$ form
\begin{equation}\label{leib}
  \Ga(fg, h) = f\Ga(g,h)+g\Ga(f,h)
\end{equation}
for smooth functions $f,g$ and $h$ in the domain of the generator. On the other hand, we can deduce concentration inequalities directly from $\Ga_2$-criterion without \eqref{leib}; see \eqref{wsga} below. Of course we still need a certain regularity condition on the semigroup $T_t$:
\begin{equation}\label{ncdif}
\Ga^A(x,x)\in L_1(\nx)\text{ for all } x\in \Dom(A)\cap\nx
\end{equation}
(more precisely, for all $x\in \Dom(A^{1/2})\cap \nx$ by extension).

{This condition is introduced in \cite{JRS} to characterize the semigroups which admit a Markov dilation with certain nice properties in analogy to classical diffusion processes.} For example the Poisson semigroup on the circle satisfies \eqref{ncdif}, but it is not a classical diffusion semigroup. For a standard semigroup $T_t$ with \eqref{ncdif}, it was proved in \cite{JZ12} that the $\Ga_2$-criterion implies the following Poincar\'e type inequalities
\begin{equation}\label{wsga}
  \|f-E_{{\rm Fix}}(f)\|_p \le C {\al^{-1/2}}\min\{\sqrt{{p}} \|\Gamma^A(f,f)^{1/2}\|_\8, ~p \|\Gamma^A(f,f)^{1/2}\|_p \}
\end{equation}
for all self-adjoint $f\in \nx$. The obstruction of inequalities like \eqref{pcr2} in the noncommutative setting was a lack of the good Burkholder inequality with appropriate norms or constants. Indeed, with the help of the optimal Burkholder--Davis--Gundy (BDG) inequality, it was proved that the classical diffusion semigroups satisfy \eqref{pcr2} under the $\Ga_2$-criterion; see \cite[Theorem 4.9]{JZ12}. This may be regarded as a shortcut of the following implication in the classical diffusion setting
\begin{equation}\label{impl}
 \Ga_2\dash\text{criterion}\Rightarrow \text{ log-Sobolev inequality} \Rightarrow \eqref{pcr2}.
\end{equation}
Here the first implication was due to Bakry--Emery \cite{BE85} and the second was due to Aida--Stroock \cite{AS94}.

The optimal classical BDG inequality due to Barlow and Yor \cite{BY82} asserts that
\[
\|X_t \|_p\le C\sqrt{p}\|\lge X, X\rge_t^{1/2}\|_p
\]
for any continuous mean 0 martingale $X$, where $\lge X, X\rge$ is the quadratic variation of $X$. One way to obtain such an inequality in the noncommutative setting is through the Burkholder inequality \cite{JX03}
 \begin{equation}\label{burk}
  \Big\|\sum_k dx_k\Big\|_p
 \le  A(p)\Big\|\Big(\sum_k  E_{k-1} (dx_k^*dx_k + dx_kdx_k^*)\Big)^{1/2}\Big\|_p + B(p)\Big(\sum_k \|dx_k\|_p^p\Big)^{\frac1p}
\end{equation}
where $dx_k= x_k- x_{k-1}$ is the martingale difference associated to the martingale $(x_k,\nx_k)$ and $E_k: \nx\to \nx_k$ is the conditional expectation. One would expect the best order of $A(p)$ is $\sqrt{p}$, which is indeed the case in the commutative theory \cite{Pin}. The difficulty in the noncommutative generality can be seen from the fact that if one requires $A(p)=B(p)=c(p)$, then the optimal order of $c(p)$ in \eqref{burk} is known to be $p$ \cite{Ran, JX05}, compared to $p/\ln p$ in the commutative theory \cite{Hi90}. This shows that general noncommutative martingales exhibit quite different behaviors from the classical martingales so that $A(p)=O(\sqrt{p})$ may not be true. Although it is still unclear to us whether $A(p)$ can be reduced to $\sqrt{p}$ in the general noncommutative setting, we do resolve an important case of this problem in this paper, which is good enough to establish Theorem \ref{mthe}. In this way we improve the main results of \cite{JZ12} for the case of semigroups acting on group von Neumann algebras generated by 1-cocycles. Our proof follows the same strategy as that in \cite{JZ12}. The difficulty mentioned above is overcome by a decoupling argument, which is the key new ingredient (Lemma \ref{deco}) in our proof. We refer the interested reader to the monograph \cite{dG} for various aspects of decoupling and applications.

Let us conclude the introduction by mentioning the relationship among log-Sobolev inequality, spectral gap inequality and $L_p$ Poincar\'e inequalities. It is well known that the log-Sobolev inequality implies the existence of spectral gap, or equivalently, $L_2$ Poincar\'e inequality. Conversely, the spectral gap inequality together with a {defective} log-Sobolev inequality yields the log-Sobolev inequality; see e.g. \cite{GZ} for these facts. On the other hand, the $L_p$ Poincar\'e inequalities obviously imply the spectral gap inequality. It would be interesting to determine when the converse implication is possible. It is known that in the classical diffusion setting the spectral gap would imply $L_p$ Poincar\'e inequalities, but with constant $Cp$; see e.g. \cite[Proposition 2.5]{Mi09}. We show similar results in the noncommutative setting under some conditions. In Section 4, we formulate certain results in this direction and prove, e.g., the following:
\begin{theorem}\label{spg1}
 Let $T_t$ be an ergodic standard semigroup acting on a diffuse probability space $(\nx,\tau)$ which satisfies \eqref{compa}. Suppose the generator $A$ of $T_t$ has a spectral gap: For $f\in \nx$,
$$\|f-\tau(f)\|_2\le C\max\{\|\Ga(f,f)^{1/2}\|_2, \|\Ga(f^*,f^*)^{1/2}\|_2\}.$$
Then for all even integer $p\ge 2$ and all $f\in \nx$,
\[
 \|f-\tau(f)\|_p\le C'p \max\{\|\Ga(f,f)^{1/2}\|_p, \|\Ga(f^*,f^*)^{1/2}\|_p\}.
\]
\end{theorem}
{We overcome the lack of \eqref{leib}} in the noncommutative setting by using the derivations of noncommutative Dirichlet forms developed by Cipriani and Sauvageot \cite{JRS} and the $L_p$ regularity theorem due to Olkiewicz--Zegarlinski \cite{OZ99}.

The paper is organized as follows. We recall some preliminary facts in Section 2. Then we prove the $L_p$ Poincar\'e inequalities with constant $C\sqrt{p}$ in Section 3. The relationship between the spectral gap inequality and $L_p$ Poincar\'e inequalities is discussed in Section 4. Some examples and illustrations are given in Section 5.

\section{Preliminaries}
\subsection{Crossed products}\label{crosp}
We briefly recall the crossed product construction. Our reference is  \cite{Tak, JMP}. Let $G$ be a discrete group with left regular representation $\la: G\to B(\ell_2(G))$. Given a noncommutative probability space $(\nx,\tau)$, we may assume $\nx\subset B(H)$ for some Hilbert space $H$. Suppose a trace preserving action $\al$ of $G$ on $\nx$ is given, i.e., we have a group homomorphism $\al: G\to \Aut(\nx)$ (the $*$-automorphism groups of $\nx$) with $\tau(x)=\tau(\al_g(x))$ for all $x\in\nx, g\in G$. Identify $\ell_2(G)\otimes H$ with $\ell_2(G; H)$. Consider the representation $\pi$ of $\nx$ on $\ell_2(G;H)$ given by $\pi(x)=\sum_{g\in G}\al_{g^{-1}}(x)\otimes e_{g,g}$, where $e_{g,h}$ is the matrix unit of $B(\ell_2(G))$. In other words, $\pi(x)\xi(g)=\al_{g^{-1}}(x)\xi(g)$ for $x\in \nx, \xi\in \ell_2(G;H)$. Then the crossed product of $\nx$ by $G$, denoted by $\nx\rtimes_\al G$, is defined as the weak operator closure of $1_\nx\otimes \la(G)$ and $\pi(\nx)$ in $B(\ell_2(G;H))$. We usually drop the subscript $\al$ if there is no ambiguity. Clearly, $\nx\rtimes G$ is a von Neumann subalgebra of $\nx\overline \otimes B(\ell_2(G))$. In the special case $\nx=\cz$, the complex number algebra, $\cz\rtimes G$ reduces to the group von Neumann algebra $\lx(G)$. Therefore, $\lx(G)$ is a von Neumann subalgebra of $\nx\rtimes G$ and there exists a unique conditional expectation $E_{\lx(G)}:\nx\rtimes G\to \lx(G)$. If $g\in G, f_g\in \nx$, we simply write $f_g\rtimes \la(g)$ or even $f_g\la(g)$ for $\pi(f_g)(1_\nx\otimes \la(g))$. A generic element of $\nx\rtimes G$ can be written as
\begin{align*}
  \sum_{g\in G}f_g\rtimes \la(g)&=\sum_{g\in G}\pi(f_g)\la(g) = \sum_{g,h,h'} (\al_{h^{-1}}(f_g)\otimes e_{h,h})(1_\nx\otimes e_{gh',h'})\\
  &=\sum_{g,h}\al_{h^{-1}}(f_g)\otimes e_{h,g^{-1}h}.
\end{align*}
There is a canonical trace on $\nx\rtimes G$ given by $$\tau\rtimes\tau_G(f\rtimes \la(g))=\tau\otimes \tau_G(f\otimes\la(g))=\tau(f)\de_{g=e},$$
where we denote by $\tau_G$ the canonical trace on $\lx(G)$. The arithmetic in $\nx\rtimes G$ is given by
$(f\rtimes \la(g))^*=\al_{g^{-1}}(f^*)\rtimes \la(g^{-1})$ and $(f_1\rtimes \la(g_1))(f_2\rtimes \la(g_2))=(f_1\al_{g_1}(f_2))\rtimes \la(g_1g_2)$. In what follows, we may simply write $f\la(g)$ instead of $f\rtimes \la(g)$.

\subsection{1-cocycles on groups}\label{1coc}
Let $G$ be a countable discrete group with a conditionally negative length (cn-length) function $\psi: G\to\rz_+$. Recall that $\psi$ is a length function if $\psi(e)=0$ and $\psi(g)=\psi(g^{-1})$, and $\psi$ is conditionally negative if $\sum_g a_g=0\Rightarrow \sum_{g,h}\bar a_g a_h\psi(g^{-1}h)\le 0$. Then $\psi$ determines an affine representation which is given by an orthogonal representation $\al:G\to O(H_\psi)$ over a real Hilbert space $H_\psi$ together with a map $b_\psi:G\to H_\psi$ satisfying the cocycle law, i.e., $b_\psi(gh)=b_\psi(g)+\al_g(b_\psi(h))$; see e.g. \cite{BO}. To be more concrete, let $\rz G$ be the algebraic group algebra of $G$. Put $K(g,h)=\frac12(\psi(g)+\psi(h)-\psi(g^{-1}h))$ for $g,h\in G$ and define
\[
 [\sum_g a_g \de_g, \sum_{g'}a_{g'}\de_{g'}]=\sum_{g,g'}a_g \bar a_{g'} K(g,g').
\]
Then $H_\psi$ is the closure of the quotient of $\rz G$ by the kernel of $[\cdot, \cdot]$, i.e., $H_\psi=\overline{\rz G/N_\psi}$ where $N_\psi$ is the kernel of $[\cdot, \cdot]$. Define $b_\psi(g)=\de_g+N_\psi$ for $g\in G$ and $\al_g(b_\psi(h))=b_\psi(gh)-b_\psi(g)$. In this way, we obtain a 1-cocycle $b_\psi$. Conversely, suppose that $b: G\to H$ is a 1-cocycle with coefficients in an orthogonal representation $(\al,H)$ of $G$. Put $\psi(g)=\|b(g)\|^2$ for $g\in G$. Then $\psi$ is a cn-length function on $G$. By a Gram--Schmidt procedure, we may choose an orthonormal basis of $H_\psi$ so that $b_\psi(g)$ depends on only finitely many nonzero coordinates for all $g\in G$. This observation will save us from some technical problems. We write $H_\psi\cong \ell_2(d)$ even if $d=+\8$.

\subsection{Gaussian measure space construction}\label{gaus}
Note that the Hilbert space $L_2([0,\8)) \otimes H_\psi$ is separable. By the well known Gaussian space construction (see e.g. \cite{RY,Str}), there exists a probability space $(\Om, \fx, \pz)$ and a linear map
\[
 \bt: L_2([0,\8)) \otimes H_\psi \to L_2(\Om, \pz)
\]
such that $\bt(1_{[0,t]}\otimes \xi)$ is Gaussian centered and
\[
\ez[\bt(1_{[0,t]}\otimes \xi) \bt(1_{[0,s]}\otimes \eta)]=2\lge 1_{[0,t]}\otimes \xi,  1_{[0,s]}\otimes \eta\rge_{L_2([0,\8)) \otimes H_\psi}=2\min\{t, s\}\lge \xi,\eta\rge_{H_\psi}.
\]
We simply write $\bt_t(\xi)=\bt(1_{[0,t]}\otimes \xi)$ and denote by $\fx_t$ the $\si$-subalgebra of $\fx$ generated by $\bt_s(\xi)$, for all $s\le t$ and $\xi\in H_\psi$. By Kolmogorov's continuity criterion (see, e.g., \cite[Theorem I.2.1]{RY}), $\bt_t(\xi)$ thus constructed is a $\rz^d$-valued Brownian motion, where $\rz^d$ is viewed as an abstract Wiener space associated to $H_\psi$ if $d=\8$. Indeed, by construction the $k$-th component of $\bt_t(\xi)$ is a 1-dimensional Brownian motion with mean 0 and variance $2t|\xi_k|^2$, where $\xi_k$ is the $k$-th component of $\xi$, and all the components of $\bt_t(\xi)$ are independent. More explicitly, we can simply take $\Om=(\rz^d)^{[0,+\8)}$. Then $\bt_t(\xi)(\om) =\sqrt{2}\sum_{k=1}^d \xi_k\om_t^k$, where $\om_t^k$ is the $k$-th coordinate map at time $t$. It is readily seen that $\bt_t(\xi)$ is a random variable in $(\Om,\pz)$ with variance $2t\|\xi\|^2$. Suppose $\al$ is an orthogonal representation of $G$ on $H_\psi$. By \cite[Theorem 8.3.14]{Str}, $\al$ determines a Gaussian measure preserving action $\al^*$ on $(\Om,\pz)$. By abuse of notation, we still denote $\al^*$ by $\al$. The $G$-action $\al$ on $\rz^d$ induces an action $\hat{\al}$ on $L_2(\Om, \pz)$, such that $\hat{\al}_g(\bt_t(h))=\bt_t(\al_g(h))$. It follows that 
\begin{equation}\label{eqv0}
\hat{\al}_g(f)(\om) = f(\al_{g^{-1}}(\om) )
\end{equation}
for $f\in L_2(\Om,\pz)$, where $\al_{g}(\om)_t=\al_g(\om_t)$. Clearly, $\hat\al$ extends naturally to isometric actions on $L_p(\Om,\pz)$ for $1\le p\le \8$. In the following we will consider the von Neumann algebra $L_\8(\Om, \pz)\rtimes_{\hat\al} G$ and  simply omit the subscript $\hat\al$ in the notation. To conclude this section, we remark that although $H_\psi$ (and thus $\bt_t(\xi)$) may be infinitely dimensional, $\bt_t(b_\psi(g))$ is always a finite dimensional Brownian motion for all $g\in G$ because $b_\psi(g)$ only depends on finitely many nonzero coordinates.

\subsection{Hardy spaces associated to martingales}\label{hard}
We refer to \cite{JM10, JP} for this subsection. Let $\nx_1\subset \nx_2\subset \cdots \subset \nx$ be a filtration with conditional expectations $E_n: \nx\to \nx_n$. Recall that a sequence  $(x_n)\subset \nx$ is a martingale if $x_n\in \nx_n$ and $E_n(x_{n+1})=x_n$. Let $dx_k=x_k-x_{k-1}$ be the associated martingale differences. We need the conditional Hardy spaces associated to martingales given as follows. For $1\le p<\8$, define
\[\|x\|_{h_p^d}=\Big(\sum_{k}\|dx_k\|_p^p\Big)^{1/p},  \|x\|_{h_p^c} = \left\|\Big(\sum_{k}E_{k-1}(dx_k^* dx_k)\Big)^{1/2}\right\|_p,\]
and $\|x\|_{h_p^r}=\|x^*\|_{h_p^c}$.

We are going to use the continuous filtration $(\nx_t)_{t\ge0}\subset \nx$ in the following. Recall that a martingale $x$ is said to have almost uniform (or a.u. \hspace{-0.3em}for short) continuous path if for every $T>0$, every $\eps>0$ there exists a projection $e$ with $\tau(1-e)<\eps$ such that the function $f_e:[0,T]\to \nx$ given by $f_e(t)=x_te\in \nx$ is norm continuous. Let $\si=\{0=s_0, \cdots ,s_n=T\}$ be a partition of the interval $[0,T]$ and $|\si|$ its cardinality. Put
\[
\|x\|_{h_p^c([0,T];\si)} = \Big\|\sum_{j=0}^{|\si|-1}E_{s_{j}}|E_{s_{j+1}}x-E_{s_j}x|^2 \Big\|^{1/2}_{p/2}, \quad 2\le p\le \8 ,
\]
\[
 \|x\|_{h_p^d([0,T];\si)} = \Big(\sum_{j=0}^{|\si|-1}\|E_{s_{j+1}}x-E_{s_j}x\|_p^p \Big)^{1/p}, \quad 2\le p<\8,
\]
and $\|x\|_{h_p^r([0,T];\si)}=\|x^*\|_{h_p^c([0,T];\si)}$. Let $\ux$ be an ultrafilter refining the natural order given by inclusion on the set of all partitions of $[0,T]$. Let $x\in L_p(\nx)$. For $2\le p< \8$, we define
\[\langle x,x\rangle_T= \lim_{\si, \ux}\sum_{i=0}^{|\si|-1}E_{s_i}|E_{s_{i+1}}x-E_{s_i}x|^2.\]
Here the limit is taken in the weak* topology and it is shown in \cite{JKPX} that the convergence is also true in  $L_{p}$ norm $\|\cdot\|_{p/2}$ for all $2<p<\8$. We define the continuous version of $h_p$ norms for $2\le p<\8$,
\[\|x\|_{h_p^c([0,T])}=\lim_{\si, \ux}\|x\|_{h_p^c([0,T];\si)},\]
\[\|x\|_{h_p^d([0,T])}=\lim_{\si, \ux}\|x\|_{h_p^d([0,T];\si)}.\]
and $\|x\|_{h_p^r([0,T])}=\|x^*\|_{h_p^c([0,T])}$ for $2\le p < \8$. Then for all $2<p<\8$
\begin{equation*}
 \|x\|_{h_p^c([0,T])}= \|\lge x,x\rge_T\|^{1/2}_{p/2}.
\end{equation*}
A martingale $x$ is said to be of vanishing variation if $\|x\|_{h^d_p([0, T])}=0$ for all $T>0$ and all $2<p<\infty$. If $x$ has a.u. \hspace{-0.3em}continuous path, then it is of vanishing variation. In the following, we will apply these results to matrix-valued martingales driven by Brownian motions. Hence they automatically have almost uniform continuous paths.

\section{$L_p$ Poincar\'e inequalities for group von Neumann algebras}
Consider the semigroup $T_t$ acting on $LG$ given by $T_t \la(g)=e^{-t\psi(g)} \la(g)$, where $\psi$ is a conditionally negative length function on $G$; see Section \ref{1coc}. $(T_t)$ satisfies \eqref{ncdif}. For a proof of this fact, see \cite{JZ12}. According to \cite{JRS}, $T_t$ admits a Markov dilation with almost uniformly continuous path. We refer the reader to \cite{JRS,JZ12} for the precise definition. In fact, we can write down the dilation explicitly in our setting. Following the notation of Section \ref{crosp} and \ref{gaus}, we define
\[
\pi_t: LG\to L_\8(\Om,\fx_t)\rtimes G, \quad \pi_t(
\la(g))=e^{i\bt_t(b_\psi(g))(\om)}\rtimes \la(g).
\]
The Markov property can be checked directly because
\[
E_s(\pi_t \la(g))=\pi_s(T_{t-s}\la(g))
\]
for $s<t$ and $g\in G$, and the same equation holds for arbitrary $f\in LG$ by linearity and density. Here $E_s: L_\8(\Om,\fx)\rtimes G\to L_\8(\Om,\fx_t)\rtimes G$ is the conditional expectation. It follows that
\begin{equation}\label{mart}
m_t(x)=\pi_t(x)-\pi_0(x)+\int_0^t \pi_s(A x) \dd s
\end{equation}
is a martingale with almost uniformly continuous path for $x\in  LG$. We will need the reversed martingale. To this end, let us fix a large constant $L>0$, and define
$$v_t(x)=\pi_t T_{L-t}x$$
for $t<L$. It is easy to check that $(v_t)_{0\le t\le L}$ is a martingale.

For $\xi\in H_\psi$ with finitely many nonzero coordinates, we write $\bt_t(\xi)=\sum_{k=1}^\8 \xi_k B_t^k$, where $(B_t^k)_k$ are independent Brownian motions with variance $2t$, and can be given by $B_t^k(\om)=\sqrt{2}\om_t^k$ in the notation of Section \ref{gaus}. By Ito's formula,
\begin{equation*}
   e^{i\bt_t(\xi)}=1+ i\sum_{k} \xi_k \int_0^t e^{i\bt_s(\xi)} \dd B_s^k - \|\xi\|^2\int_0^t e^{i\bt_s(\xi)} \dd s.
\end{equation*}
It follows that
\begin{equation}\label{ito}
  \pi_t(\la(g))=\la(g)+i\sum_{k} b_\psi(g)_k \Big(\int_0^t e^{i\bt_s(b_\psi(g))} \dd B_s^k\Big) \rtimes \la(g)-\int_0^t \pi_s(A(\la(g))) \dd s,
\end{equation}
where $b_\psi(g)_k$ is the $k$-th coordinate of $b_\psi(g)$. Combining \eqref{mart} and \eqref{ito}, we have
\[
m_t(\la(g)) = i\sum_{k} b_\psi(g)_k \Big(\int_0^t e^{i\bt_s(b_\psi(g))} \dd B_s^k\Big) \rtimes \la(g).
\]
Note that $v_t(\la(g))= e^{-(L-t)\psi(g)}e^{i\bt_t(b_\psi(g))}\la(g)$. By Ito's formula, we have
\[
 v_t(\la(g))=v_0(\la(g))+ i\sum_k b_\psi(g)_k\Big(\int_0^t e^{-(L-s)\psi(g)} e^{i\bt_s(b_\psi(g))}\dd B_s^k\Big)\rtimes \la(g).
\]
It follows that
\begin{align*}
   &\pi_L(\la(g))-\pi_0(T_L\la(g)) = \int_0^L \dd v_s(\la(g))\\
   &= i\sum_k b_\psi(g)_k\Big(\int_0^L e^{-(L-s)\psi(g)} e^{i\bt_s(b_\psi(g))}\dd B_s^k\Big)\rtimes \la(g).
\end{align*}
Let $x=\sum_{g\in G} x_g \la(g)\in \cz G$ be a finite sum. Then
\begin{equation}\label{rmart}
  \pi_L(x)- T_L(x)=i\sum_{g\in G, k\in \nz}x_g b_\psi(g)_k\Big(\int_0^L e^{-(L-s)\psi(g)} e^{i\bt_s(b_\psi(g))}\dd B_s^k\Big)\rtimes \la(g).
\end{equation}
We consider the discretized stochastic integral (assuming $n=L$), or martingale transform
\begin{equation}\label{dmar}
   M_n(x)=i\sum_{g\in G, j\in \nz} \sum_{k=0}^{n-1}x_g b_\psi(g)_j e^{-(L-t_k)\psi(g)}[ e^{i \bt_{t_k}(b_\psi(g))} d B_{t_k}^j ]\rtimes \la(g),
\end{equation}
where $d B_{t_k}^j= B_{t_{k+1}}^j-B_{t_k}^j$. {It is well known that this martingale converges to the stochastic integral in $L_p$ for $2\le p<\8$.} Indeed, the stochastic integral can be defined as the $L_2$ limit of a certain martingale transform; see e.g. \cite{KS91}. Similar argument can be applied to the case of $p>2$.
We need a precise Burkholder inequality for this (noncommutative) martingale in order to derive the subgaussian property. As explained in Introduction (see also \cite{JZ12}), however, the upper bounds in known inequalities can only result in the inequality \eqref{wsga}. Our approach here relies on the decoupling technique thanks to the special structure in the martingale transform.

Let us consider the discrete time martingale $x_n\in L_\8(\Om,\fx_n)\rtimes G$ given by
\begin{equation}\label{gmar}
   x_n=\sum_{k=0}^{n-1}\sum_{g\in G,j\in \nz} [f_g^j(\bt_k(g)) dB_{k+1}^j]\rtimes \la(g)
\end{equation}
where $f_g^j$ is a continuous function, for any $j\in \nz$, $(B_{k}^j)_k$ is a martingale with independent martingale differences $dB_{k+1}^j=B_{k+1}^j-B_{k}^j$. In what follows we will simply write $\sum_{g,j}$ instead of $\sum_{g\in G,j\in \nz}$ and this always means a finite sum.
\begin{lemma}[Decoupling]\label{deco}
  Suppose $\bt_k(g)$ is measurable with respect to $\si(B_m^j, m\le k, j\in \nz)$ for $g\in G$ and $k=0,\cdots, n-1$ and $(\td{B}_k^j)$ an independent copy of $(B_k^j)$. Then for $2\le p<\8$,
  \[
  \Big\|\sum_{k=0}^{n-1}\sum_{g,\ell}[f_g^\ell(\bt_k(g)) dB_{k+1}^\ell]\rtimes  \la(g)\Big\|_p^p\le 4^p \Big\|\sum_{k=0}^{n-1}\sum_{g,\ell} [f_g^\ell(\bt_k(g)) d\td{B}_{k+1}^\ell]\rtimes \la(g)\Big\|_p^p.
  \]
\end{lemma}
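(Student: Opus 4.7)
My plan is to view $X_k := \sum_{g,\ell}[f_g^\ell(\bt_k(g))\,dB^\ell_{k+1}]\rtimes\la(g)$ and $X_k'$ (its analog with $d\td B^\ell_{k+1}$) as \emph{tangent} martingale difference sequences with respect to the joint filtration $\gx_k := \si(B^j_m, \td B^j_m : m\le k,\, j\in\nz)$ on the enlarged space $\Om\times\td\Om$. Both are $\gx_k$-MDS; conditioned on $\gx_k$, $X_k$ and $X_k'$ are independent and share the same conditional distribution (being linear combinations of $\gx_k$-measurable operator coefficients against iid centered Gaussians of equal variance). The decoupled sequence $(X_k')$ further satisfies the conditional-independence property with respect to $\si(B)$: $\ez[X_k'\mid \si(B)]=0$ for each $k$, because $d\td B^\ell_{k+1}$ is independent of $\si(B)$ with mean zero.

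\textbf{Step 1 (Conditional contraction).} Since $\sum_k X_k$ is $\si(B)$-measurable and $\ez[\sum_k X_k' \mid \si(B)]=0$, we can write $\sum_k X_k = \ez[\sum_k(X_k \pm X_k') \mid \si(B)]$ for either sign. Conditional contractivity of $\|\cdot\|_p$ on the crossed product $L_\8(\Om\times\td\Om)\rtimes G$ (a trace-preserving normal conditional expectation is $L_p$-contractive) yields
\[
\Bigl\|\sum_k X_k\Bigr\|_p \;\le\; \Bigl\|\sum_k(X_k \pm X_k')\Bigr\|_p,
\]
and the two choices of sign give the same value by the distributional symmetry $\td B\stackrel{d}{=} -\td B$.

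\textbf{Step 2 (Comparing the mixed and decoupled sums).} The remaining task is to estimate $\|\sum_k(X_k\pm X_k')\|_p$ by $4\,\|\sum_k X_k'\|_p$. The crucial tangent bracket identity
\[
\ez_{k-1}\bigl[(X_k \pm X_k')^*(X_k \pm X_k')\bigr] \;=\; 2\,\ez_{k-1}[X_k^*X_k] \;=\; 2\,\ez_{k-1}[(X_k')^*X_k']
\]
(and its row analog) shows that the mixed martingale $\sum_k(X_k\pm X_k')$ and the decoupled martingale $\sum_k X_k'$ have the same column and row brackets up to the factor $\sqrt 2$. I would close the comparison at the level of these brackets by working \emph{conditionally on $\si(B)$}: given $\si(B)$, both martingales become linear combinations of the independent Gaussians $d\td B^\ell_{k+1}$ (for the decoupled sum) or of $dB^\ell_{k+1}\pm d\td B^\ell_{k+1}$ (for the mixed sum) with $\si(B)$-measurable operator coefficients, and the noncommutative Gaussian Khintchine inequality compares each conditional $L_p$-norm to the appropriate bracket with \emph{$p$-independent} constants. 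Integrating back over $\si(B)$ and tracking the factor $\sqrt 2$ from the bracket identity should produce the overall constant $4$.

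\textbf{Main obstacle.} The delicate point throughout is the $p$-independence of $4$: a naive invocation of the noncommutative Burkholder--Gundy inequality to the mixed martingale introduces a factor of order $\sqrt p$ (the sharp upper BDG constant), which would be fatal for the subgaussian growth targeted in Theorem \ref{mthe}. Bypassing BDG requires genuinely exploiting the CI structure of $(X_k')$ with respect to $\si(B)$, reducing the problem — after conditioning on $\si(B)$ — to a purely Gaussian Khintchine-type comparison whose constants are $p$-independent, and only then integrating the $B$-variables back out. Making this conditional argument work cleanly in the crossed-product setting, with the crossed-product trace factoring through $\si(B)$, is the key technical step.
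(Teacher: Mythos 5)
Your Step 1 is correct and is essentially the ``easy half'' of any decoupling argument: since $\ez[X_k'\mid\si(B)]=0$ and conditional expectations are $L_p$-contractions on the crossed product, $\|\sum_k X_k\|_p\le\|\sum_k(X_k+X_k')\|_p$. The genuine gap is in Step 2, and it is exactly the point you flag as the ``main obstacle'': there is no Gaussian Khintchine-type comparison with $p$-independent constants that converts the bracket identity $\ez_{k-1}[(X_k+X_k')^*(X_k+X_k')]=2\,\ez_{k-1}[(X_k')^*X_k']$ into the norm inequality $\|\sum_k(X_k+X_k')\|_p\le 4\|\sum_k X_k'\|_p$. The upper noncommutative Khintchine/Burkholder--Gundy estimate for a Gaussian sum $\sum_k g_k a_k$ necessarily carries a constant of order $\sqrt p$ (already $\|g\|_p\sim\sqrt p$ for a single standard Gaussian), so any route that passes from the mixed martingale to its bracket and back to the decoupled martingale loses a factor $\sqrt p$ --- precisely the loss that would destroy Theorem \ref{mthe}. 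Note also that conditioning on $\si(B)$ does not rescue this: given $\si(B)$ the mixed sum is $c+G$ with $c=\sum_k X_k$ a ``constant'' and $G=\sum_k X_k'$ conditionally Gaussian, and symmetrization only yields $\ez\|G\|_p^p\le\ez\|c+G\|_p^p$, the wrong direction. A scalar-valued substitute (Hitczenko's $p$-independent domination of a martingale by a tangent CI sequence) is not available in the noncommutative setting --- the paper explicitly states that whether $A(p)$ can be taken of order $\sqrt p$ for general noncommutative martingales is open --- so Step 2 cannot be closed with the tools you invoke.

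The paper's proof takes a different and more elementary route that never touches square functions or Khintchine inequalities: it is de la Pe\~na's random-selector argument. One inserts independent selectors $\de_k$ with $\ez(\de_k)=1/2$, uses $\ez[\de_k(1-\de_{k+1})]=1/4$ to write the original sum as $4\,\ez_\de$ of a randomly restricted sum (whence the constant $4^p$), applies Jensen to pull $\ez_\de$ outside the $p$-th power of the norm, replaces the increments indexed outside the random set $\Delta$ by the independent copies $d\td B$ (using independence of $\bt_k$ from $dB_{k+1}$ and equivariance of the expectation under the diagonal group action), and finally recovers the full decoupled sum by applying two further conditional expectations, which are again $L_p$-contractions. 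Every step is either an exact identity, Jensen, or a contraction, which is how the constant stays $p$-independent. If you want to repair your argument, you should abandon the bracket/Khintchine comparison in Step 2 and adopt this selector scheme.
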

\begin{proof}
To shorten the notation, we simply write $\bt_k$ for $\bt_k(g)$. Consider independent random selectors $\de_k,k=0,\cdots,n$ with $\ez(\de_k)=1/2$. Define $\Delta=\{j\in \{0,\cdots, n\}: \de_j=1\}$. Then $\ez(\de_j(1-\de_k))=1/4$ for $j\neq k$. The left-hand side is
\begin{align*}
  A&:=\Big\|\sum_{g,\ell} \sum_{j,k=0}^{n}1_{\{j=k+1\}} [f_g^\ell(\bt_k) dB_{j}^\ell]\rtimes \la(g)\Big\|_p^p \\
  &= 4^p \Big\| \sum_{g,\ell}\sum_{j,k=0}^{n}1_{\{j=k+1\}}\ez_\de \de_k(1-\de_j) [f_g^\ell(\bt_k) dB_{j}^\ell]\rtimes \la(g)\Big\|_p^p.
\end{align*}
By Jensen's inequality, we have
\begin{align*}
  A &\le 4^p \ez_\de  \Big\| \sum_{g,\ell}\sum_{j,k=0}^{n} 1_{\{j=k+1\}} \de_k(1-\de_j)[f_g^\ell(\bt_k) dB_{j}^\ell]\rtimes \la(g) \Big\|_p^p\\
&\le 4^p \ez_\de \Big\| \sum_{g,\ell} \sum_{k\in \Delta,j\notin \Delta} 1_{\{j=k+1\}} [f_g^\ell(\bt_k) dB_{j}^\ell]\rtimes \la(g) \Big\|_p^p.
\end{align*}
Since $\bt_k$ and $dB_{k+1}^\ell$ are independent for all $\ell$, and taking expectation of $\bt_k$'s commutes with the group action, we have
\[
A \le 4^p \ez_\de \Big\| \sum_{g,\ell} \sum_{k\in \Delta,j\notin \Delta} 1_{\{j=k+1\}} [f_g^\ell(\bt_k) d\td{B}_{j}^\ell]\rtimes \la(g) \Big\|_p^p
\]
for any independent copy $(\td{B}_k^j)$ of $(B_k^j)$. We may and do fix a realization of $\de$ and thus fix a partition $\Delta_0\sqcup \Delta_0^c=\{0,\cdots, n\}$ so that
\begin{equation}\label{eqdc}
  A \le 4^p \Big\| \sum_{g,\ell}\sum_{k\in \Delta_0,k+1\notin \Delta_0} [f_g^\ell (\bt_k) d\td{B}_{k+1}^\ell]\rtimes \la(g) \Big\|_p^p.
\end{equation}
Now the von Neumann algebra has been enlarged to $(L_\8(\Om,\fx)\overline\otimes L_\8(\td{\Om},\td{\fx})) \rtimes G$ by diagonal extension of the group action. Here $(\td{\Om},\td{\fx})$ is generated by $(\td{B}_{k}^\ell)$. Let
\[
id\otimes E_{\td{\Delta_0^c}}: L_\8(\Om,\fx)\overline{\otimes } L_\8(\td{\Om},\td{\fx}) \to L_\8(\Om,\fx)\overline{\otimes} L_\8(\si(\td{B}_{k+1}^\ell:\ell\in\nz, k+1\in \Delta_0^c))
 \]
denote the conditional expectation. Note that $d\td{B}_{k+1}$'s are mean zero. Then we may rewrite \eqref{eqdc} as
\[
A\le 4^p \Big\| \Big(id\otimes E_{\td{\Delta_0^c}} \big(\sum_{k\in \Delta_0}\sum_{k+1=1}^n\sum_{g,\ell} [f_g^\ell(\bt_k) d\td{B}_{k+1}^\ell]\big)\Big)\rtimes \la(g) \Big\|_p^p.
\]
Observing that $k\in \Delta_0$ if and only if $k+1\in \Delta_0+1$, we have
\[
A\le 4^p \Big\| id\otimes E_{\td{\Delta_0+1}}\Big(id\otimes E_{\td{\Delta_0^c}} \big(\sum_{k=0}^{n-1}\sum_{g,\ell} [f_g^\ell(\bt_k) d\td{B}_{k+1}^\ell]\big)\Big)\rtimes \la(g) \Big\|_p^p,
\]
where $id\otimes E_{\td{\Delta_0+1}}$ is defined similarly to $id\otimes E_{\td{\Delta_0^c}}$ as above. Since conditional expectations extend to contractions on $L_p$, the proof is complete.
\end{proof}
\begin{rem}\label{decx}
  The general decoupling argument is a very powerful tool in various applications. In fact, a more general version of Lemma \ref{deco} holds. Namely, we can remove the condition that $dB_{k+1}^j$'s are martingale differences, only require them to be independent from $\bt_k(g)$. The proof follows the general decoupling technique developed for $U$-statistics due to de la Pe\~na \cite{dlP92}; see also the proof of \cite[Theorem 3.1.1]{dG}. We refer the interested reader to the monograph \cite{dG} for an extensive discussion of decoupling methods. We keep the current version for simplicity.
\end{rem}
Let us denote by $\td{x}_n$ the decoupled version of $x_n$, i.e.,
\[
\td{x}_n=\sum_{k=0}^{n-1}\sum_{g,j} [f_g^j(\bt_k(g)) d\td{B}_{k+1}^j]\rtimes \la(g).
\]
Consider the von Neumann subalgebra $\nx= L_\8(\Om,\fx)\rtimes G$. By the noncommutative Rosenthal inequality proved in \cite{JZ11}, we have for $2\le p<\8$,
\[
\|\td{x}_n\|_p\le C\max\Big\{ \sqrt{p}\Big\|\Big(\sum_{j=1}^n E_\nx(d\td{x}_j^*d\td{x}_j + d\td{x}_jd\td{x}_j^*) \Big)^{1/2}\Big\|_p, ~p\Big(\sum_{j=1}^n\|d\td{x}_j\|_p^p\Big)^{1/p}\Big\}
\]
where $d\td{x}_j=\sum_{g,\ell} [f_g^\ell(\bt_{j-1}(g)) d\td{B}_{j}^\ell]\rtimes \la(g)$ for $j=1,\cdots, n$. Now it is crucial to observe that
\[
E_\nx(d\td{x}_j^*d\td{x}_j + d\td{x}_jd\td{x}_j^*) = E_{j-1}(d{x}_j^*d{x}_j + d{x}_jd{x}_j^*),
\]
and $\|d\td{x}_j\|_p=\|d{x}_j\|_p$, where $dx_j=x_j-x_{j-1}$ is the martingale difference. We have then for $2\le p<\8$,
  \[
  \|x_n\|_p\le C\max\Big\{ \sqrt{p}\|(\sum_{k=1}^{n} E_{k-1}( dx_{k}^*dx_{k}+ dx_{k}dx_{k}^*))^{1/2} \|_p, ~ p(\sum_{k=1}^{n-1}\|dx_k\|_p^p)^{1/p}\Big\}.
  \]
In other words,
\begin{equation}\label{ros}
  \|x_n\|_p\le C\max\{\sqrt{p}\|x_n\|_{h_p^c}, \sqrt{p}\|x_n\|_{h_p^r}, p\|x_n\|_{h_p^d}\}.
\end{equation}
Now apply \eqref{ros} to the discretized martingale \eqref{dmar}. By the facts on Hardy spaces presentated in Section \ref{hard},
\[
\|v(x)\|_{h_p^c([0,L])}=\|\lge v(x),v(x)\rge_L^{1/2}\|_p = \lim_{n\to\8} \|M_n(x)\|_{h_p^c}.
\]
Since $v(x)$ is driven by Brownian motion, it has continuous path and is of vanishing variation, i.e.,
\[
\|v(x)\|_{h_p^d([0,L])}=\lim_{n\to \8} \|M_n(x)\|_{h_p^d}=0.
\]
See \cite{JP,JZ12} for more details. Combining things together, we have shown the following result.
\begin{lemma}[Burkholder--Davis--Gundy inequality]\label{bdg}
Let $v_t(x)=\pi_{t}T_{L-t}(x)$ be the martingale associated to $x\in LG$ as before, then for $2\le p<\8$,
\[
\|v_L(x)-v_0(x)\|_p\le C\sqrt{p} \max\{\|v(x)\|_{h_p^c([0,L])}, \|v(x)\|_{h_p^r([0,L])}\}.
\]
\end{lemma}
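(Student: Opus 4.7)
The plan is to combine the decoupling lemma \ref{deco}, the noncommutative Rosenthal inequality of \cite{JZ11}, and the almost uniformly continuous path property of Brownian-driven martingales. The overall strategy: represent $v_L(x)-v_0(x)=\pi_L(x)-T_L(x)$ as a continuous stochastic integral (cf.\ \eqref{rmart}), approximate it by the discrete martingale transforms $M_n(x)$ of \eqref{dmar}, prove a discrete Rosenthal-type inequality for each $M_n(x)$ with the right constants, and then pass to the limit as the mesh tends to zero, using that the diffusive nature of Brownian motion kills the ``bad'' term.

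For the discrete bound, I would first apply Lemma \ref{deco} to each $M_n(x)$, producing a decoupled (tangent) version $\td{M}_n(x)$ whose martingale differences are independent across indices, at the cost of a factor $4^p$. To $\td{M}_n(x)$ one can directly apply the noncommutative Rosenthal inequality of \cite{JZ11}, which supplies the sharp constants $\sqrt{p}$ on the conditioned $L_2$ (column and row) terms and $p$ on the diagonal big-jump term. The bridge back to the original martingale is the pair of bookkeeping identities already highlighted in the text: the conditional square functions $E_{j-1}(dx_j^*dx_j+dx_jdx_j^*)$ are preserved under decoupling, and so is $\|dx_j\|_p$ individually. This yields \eqref{ros} for $M_n(x)$.

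Next I would pass to the continuous limit. By the Hardy-space convergence results recalled in Section \ref{hard}, $\|M_n(x)\|_{h_p^c}$ and $\|M_n(x)\|_{h_p^r}$ converge to the continuous analogues $\|v(x)\|_{h_p^c([0,L])}$ and $\|v(x)\|_{h_p^r([0,L])}$ along a suitable ultrafilter of partitions. The decisive point is that since $v(x)$ is driven by Brownian motion it has almost uniformly continuous paths and is therefore of vanishing variation, so $\|v(x)\|_{h_p^d([0,L])}=0$. Thus the $p\|M_n(x)\|_{h_p^d}$ contribution in \eqref{ros} disappears in the limit, leaving only the two $\sqrt{p}$ terms and producing the claimed bound.

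The main conceptual obstacle is precisely this diagonal term: without diffusive structure, the $p\|\cdot\|_{h_p^d}$ summand in the noncommutative Rosenthal/Burkholder estimate is genuinely needed (one cannot in general improve $A(p)$ in \eqref{burk} below order $p$), and one would obtain only the weaker Poincar\'e inequality with constant $Cp$, which as explained in the introduction is too weak for the intended entropy/transport applications. The payoff of choosing a Markov dilation with a.u.\ continuous path, as provided by \cite{JRS}, is that continuous paths force vanishing variation, which is exactly what is required to reduce the constant from $p$ to $\sqrt{p}$.
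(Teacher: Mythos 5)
Your proposal is correct and follows essentially the same route as the paper's own argument: decouple the discretized martingale transform $M_n(x)$ via Lemma \ref{deco}, apply the noncommutative Rosenthal inequality of \cite{JZ11} to the decoupled version, transfer back through the identities for the conditional square functions and the $\|dx_j\|_p$, and then pass to the continuous limit where the a.u.\ continuous (Brownian-driven) path forces vanishing variation and eliminates the $p\|\cdot\|_{h_p^d}$ term. Nothing essential is missing.
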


Let $\td{\pi}_t=\pi_{L-t}$, $\td\fx_{[t}=\fx_{L-t}$ and $E_{[t}=E_{L-t}$ for $t<L$. Define $n_t(x)=\pi_{L-t}(T_t x)$. It is easy to check that $(\td\pi_t,\td\fx_{[t})$ is a reversed Markov dilation, i.e., for $s<t$,
\[
E_{[t}\td\pi_s x = E_{L-t} \pi_{L-s}(x)=\pi_{L-t}T_{t-s}x=\td\pi_t T_{t-s}x.
 \]
It follows that $(n_t(x), E_{[t})$ is a reversed martingale and $v_L(x)-v_0(x)=n_0(x)-n_L(x)$
\begin{proof}  [Proof of Theorem \ref{mthe}]
The proof follows the same idea as for \cite[Theorem 4.4]{JZ12}. We give a sketch for completeness. By approximation, we may assume $x=\sum_{g\in G} x_g \la(g)$ is a finite linear combination. By replacing $x$ by $x-E_{\Fix}x$, we may assume $E_{\Fix}x=0$. It follows that $\lim_{L\to\8}\|T_L x\|_p=0$. The $\Ga_2$-criterion implies uniform boundedness for $\Ga(T_rx, T_rx)$ for $r\ge0$ in $L_p(LG)$. By \cite[Lemma 4.3]{JZ12} and Lemma \ref{bdg}, we have
\begin{align*}
  &\|\pi_Lx\|_p-\|\pi_0 T_Lx\|_p \le \|\pi_L x-\pi_0 T_Lx\|=\|n_0(x)-n_L(x)\|_p\\
  &\le C\sqrt{p}\max\Big\{ \Big\|\int_0^L \td\pi_{r}(\Ga(T_rx,T_rx))\dd r\Big\|_{p/2}^{1/2},~\Big\|\int_0^L \td\pi_{r}(\Ga(T_rx^*,T_rx^*))\dd r\Big\|_{p/2}^{1/2}\Big\}.
\end{align*}
Then the $\Ga_2$-criterion gives $\Ga(T_tx,T_tx)\le e^{-2\al t}T_t\Ga(x,x)$; see \cite[Lemma 4.6]{JZ12}. Since $\td\pi_t$ and $T_t$ are contractions, we have
\[
\Big\|\int_0^L \td{\pi}_{r}(\Ga(T_{r}x, T_{r}x))\dd r \Big\|_{p/2}^{1/2} \le \Big(\frac1{2\al}-\frac1{2\al e^{2\al L}} \Big)^{1/2}\|\Ga(x,x)\|_{p/2}^{1/2}.
\]
Similar inequality holds for $x^*$. Since $\pi_t$ is trace preserving $*$-homomorphism, $\|\pi_t x\|_p=\|x\|_p$ for all $t\ge0$ and $x\in LG$. We also have $\lim_{L\to\8}T_Lx =E_{\Fix} x=0$ in $L_p(LG)$. Now sending $L\to \8$ completes the proof.
\end{proof}
\begin{proof}[Proof of Corollary \ref{sgau}]
  Notice that the group algebra $\cz G$ is weakly dense in $LG$. The assertion follows verbatim the proof of \cite[Corollary 4.8]{JZ12}.
\end{proof}
\begin{proof}[Proof of Corollary \ref{tran}]
  By \cite[Theorem 1.3]{Z13}, the $L_p$ Poincar\'e inequalities with constants $\sqrt{p}$ implies the second inequality in the assertion. But Corollary \ref{sgau} says that Bakry--Emery's condition gives the subgaussian growth for $2\le p<\8$, and therefore yields the second inequality. The first one is even simpler. See Proposition 3.14 and Corollary 3.19 in \cite{JZ12}.
\end{proof}
The strategy we used here can be applied to other settings as long as the martingale obtained from the Markov dilation can be approximated by a martingale transform like \eqref{dmar}. Let us consider an application to the Lindblad operator in quantum dynamical system; see \cite{Lin,Par}.
Let $(a_j)_{j=1}^m\subset M_n$ be a family of mutually commuting Hermitian matrices, where $M_n$ is the matrix algebra of dimension $n^2$. Define $A$ acting on $M_n$ by
\[
A(x)= \sum_{j=1}^m x a_j^2+ a_j^2 x -2 a_jxa_j.
\]
Consider the semigroup $T_t=e^{-tA}$ acting on $M_n$ generated by $A$. Then we have the following result.
\begin{theorem}
  Suppose there exists $\al>0$ such that $\Ga_2^A(x,x)\ge \al \Ga^A(x,x)$ for $x\in M_n$. Then for $2\le p<\8$,
  \begin{equation}
      \|x-E_{\Fix} x\|_p\le C \sqrt{p/\al} \max\{\|\Ga^A(x,x)^{1/2}\|_p, \|\Ga^A(x^*,x^*)^{1/2}\|_p\}.
  \end{equation}
\end{theorem}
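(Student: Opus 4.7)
The plan is to build a Markov dilation for $T_t=e^{-tA}$ from the commuting Hermitian matrices $(a_j)$ and then repeat the proof of Theorem \ref{mthe}. Let $(\beta_t^j)_{j=1}^m$ be independent Brownian motions (normalised so that $\ez|\beta_t^j|^2=2t$) on a probability space $(\Om,\fx,\pz)$, and set $U_t=\exp(i\sum_j a_j\beta_t^j)$. Commutativity of the $a_j$ makes $U_t$ unitary and makes it commute with each $a_j$. Define
\[
 \pi_t\colon M_n\to L_\8(\Om,\fx_t)\otimes M_n,\qquad \pi_t(x)=U_t^* x U_t.
\]
Each $\pi_t$ is a trace-preserving $*$-homomorphism. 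Writing $U_t=U_s V_{s,t}$ with $V_{s,t}$ independent of $\fx_s$ and commuting with $U_s$, Ito's formula readily yields $\ez[V_{s,t}^* y V_{s,t}]=T_{t-s}(y)$ for deterministic $y$, so $(\pi_t)$ is a Markov dilation of $T_t$. Fix $L>0$ and consider the reversed martingale $v_t(x)=\pi_t T_{L-t}(x)$. A second application of Ito's formula, using the key identity $[a_j,U_t^* y U_t]=U_t^* [a_j,y] U_t$, gives
\[
 \pi_L(x)-T_L(x)=v_L(x)-v_0(x)=-i\sum_j\int_0^L \pi_t\bigl([a_j,T_{L-t}x]\bigr)\,\dd\beta_t^j.
\]

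Discretising on a partition $0=t_0<\cdots<t_n=L$ produces a martingale transform of the form \eqref{gmar} with the group replaced by the trivial group: the coefficients $\pi_{t_k}([a_j,T_{L-t_k}x])$ are $\fx_{t_k}$-measurable while the increments $d\beta_{t_k}^j$ are independent of $\fx_{t_k}$. As noted in Remark \ref{decx}, the proof of Lemma \ref{deco} only uses these two structural facts, so it applies verbatim and replaces $d\beta_{t_k}^j$ by an independent copy $d\td\beta_{t_k}^j$ at the cost of a factor $4^p$. The decoupled martingale has genuinely independent differences, so the noncommutative Rosenthal inequality of \cite{JZ11} delivers the sharp $\sqrt p$-constant; vanishing of the $h_p^d$-term for a Brownian stochastic integral then yields, in the continuous limit, the BDG-type bound
\[
 \|\pi_L(x)-T_L(x)\|_p\le C\sqrt p\,\max\bigl\{\|v(x)\|_{h_p^c([0,L])},\|v(x)\|_{h_p^r([0,L])}\bigr\}.
\]

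A direct computation from the definition gives $\Ga^A(y,y)=\sum_j[a_j,y]^*[a_j,y]$, and hence $\lge v(x),v(x)\rge_L=2\int_0^L \pi_t\bigl(\Ga^A(T_{L-t}x,T_{L-t}x)\bigr)\,\dd t$. The Bakry--Emery hypothesis yields $\Ga^A(T_s y,T_s y)\le e^{-2\al s} T_s\Ga^A(y,y)$ by the argument of \cite{JZ12}*{Lemma 3.6}, and combining this with the $L_{p/2}$-contractivity of $\pi_t T_{L-t}$ gives $\|v(x)\|_{h_p^c([0,L])}\le\al^{-1/2}\|\Ga^A(x,x)^{1/2}\|_p$; the row bound follows by passing to $x^*$. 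Finally, $\|\pi_L(x)\|_p=\|x\|_p$ since $\pi_L$ is a trace-preserving $*$-homomorphism, and in the finite-dimensional algebra $M_n$ one has $T_L x\to E_{\Fix}x$ in $L_p$. Replacing $x$ by $x-E_{\Fix}x$ first (which leaves $\Ga^A(x,x)$ unchanged because $[a_j,E_{\Fix}x]=0$) and then letting $L\to\8$ completes the proof. The principal technical point is the applicability of Lemma \ref{deco} outside the group setting; as above, this is secured by the fact that its proof relies only on $\fx_{t_k}$-measurability of the coefficient and mean-zero independence of the increment.
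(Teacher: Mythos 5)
Your proposal is correct and follows essentially the same route as the paper's own (sketched) proof: the same unitary dilation $u_t^*xu_t$ built from commuting Brownian motions, the same Ito computation producing the stochastic integral $-i\sum_j\int_0^L\pi_s([a_j,T_{L-s}x])\,\dd B_s^j$, the same discretization-plus-decoupling step to reach the $\sqrt p$ BDG bound, and the same use of $\Ga(T_sy,T_sy)\le e^{-2\al s}T_s\Ga(y,y)$ to finish. Your added verifications (the explicit formula $\Ga^A(y,y)=\sum_j[a_j,y]^*[a_j,y]$, the Markov property via $U_t=U_sV_{s,t}$, and the observation that Lemma \ref{deco} only needs measurability of the coefficients and independence of the increments) are all accurate and merely flesh out what the paper leaves as a sketch.
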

\begin{proof}[Sketch of proof]
  The proof follows that of Theorem \ref{mthe} with appropriate modification. We sketch the main steps here. Let $u_t= \exp(i\sum_{j=1}^mB_t^j a_j)$, where $B_t^j$'s are independent Brownian motions with generator $d^2/dx^2$. Since we assumed $a_j$'s are mutually commuting, by Ito's formula, $u_t$ satisfies the following stochastic differential equation
  \[
  \dd u_t= -\sum_{k=1}^m a_k^2 u_t \dd t+ i\sum_{k=1}^m u_ta_k \dd B_t^k.
  \]
  Let $\pi_t x=u_t^* x u_t$ for $x\in M_n$. Then it was shown in \cite{JRS}
  that $\pi_t$ is a Markov dilation for $T_t$, i.e., $E_s\pi_t x= \pi_s T_{t-s} x$ for $s<t$.

  Fix $L>0$. Let $v_t(x)=\pi_t T_{L- t}x$. It is a martingale for $0<t<L$. By Ito's formula,
  \[
  \pi_L x - \pi_0T_L x = i\sum_k \int_0^L u_s^* (-a_k T_{L-t}x +T_{L-t}x a_k)u_s \dd B_s^k .
  \]
  Then we can discretize the stochastic integral and apply a decoupling argument to find the  BDG inequality for $v_t(x)$, as what we did in the proof of Theorem \ref{mthe}. Note that $n_t(x):=\pi_{L-t}(T_t x)$ is a reversed martingale and $n_0(x)-n_L(x)=\pi_Lx-T_Lx$. Combining \cite[Lemma 4.3]{JZ12} with the $\Ga_2$-criterion, we arrive at the assertion.
\end{proof}

\section{Spectral gap and $L_p$ Poincar\'e inequalities}
In the classical diffusion setting, if the probability measure is non-atomic, it is known that the $L_2$ Poincar\'e inequality (a.k.a. the spectral gap inequality) implies the $L_p$ Poincar\'e inequalities with constant $Cp$ for $2\le p<\8$; see e.g. \cite[Proposition 2.5]{Mi09}. In this section, we show that such implication still holds in the noncommutative setting under some conditions. A crucial ingredient in the commutative theory is the Leibniz rule \eqref{leib} for the gradient form, which is not available in general non-diffusion setting. The remedy relies on two ingredients. The first is the derivation property of $\Ga$ proved in \cite{CS03}*{Theorem 8.2} (see also \cite{JRS}), i.e., there exists a closable derivation $\delta$ on $L_2(\nx, \tau)$, such that
\begin{equation}\label{deri}
  \tau(\Ga(x,x))=\tau(\de(x)^*\de(x))
\end{equation}
for $x\in\Dom (A^{1/2})\cap\nx$ (the domain of the Dirichlet form). The second is the following assumption
\begin{equation}\label{compa}
  C^{-1}\|\de(x)\|_p \le \max\{\|\Ga(x,x)^{1/2}\|_p, \|\Ga(x^*,x^*)^{1/2}\|_p\}\le C\|\de(x)\|_p
\end{equation}
for all $1<p<\8$.

\begin{rem}
The assumption \eqref{compa} is verified in Junge--Ricard--Shlyakhtenko \cite{JRS} for any standard semigroup which satisfies \eqref{ncdif}. Indeed, if interpreted correctly such a derivation can be constructed in an ultra-power using approximating $\delta_{\al}$ as in Section 3 of \cite{CS03} for all completely positive self-adjoint trace preserving semigroups.  The proof uses free probability theory to construct Markov dilations. We will not pursue this point here further.
\end{rem}

The following technical result is standard.
\begin{lemma}\label{gfin}
 Let $1\le p<\8$. Then
\[
 \|\Ga(x,y)\|_p\le \|\Ga(x,x)\|_p^{1/2}\|\Ga(y,y)\|_p^{1/2}.
\]
\end{lemma}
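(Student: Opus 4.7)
The plan is to combine a matrix Cauchy--Schwarz for the carr\'e du champs $\Ga$ with the noncommutative H\"older inequality.

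First I would establish the $2\times 2$ operator-matrix positivity
\[
\begin{pmatrix} \Ga(x,x) & \Ga(x,y) \\ \Ga(y,x) & \Ga(y,y) \end{pmatrix} \ge 0 \quad \text{in } M_2(\nx).
\]
This can be done by amplification: the semigroup $T_t\otimes \mathrm{id}_{M_2}$ acts on $\nx\overline{\otimes}M_2$ with generator $A\otimes \mathrm{id}$, and a direct expansion from the definition of the gradient form shows
\[
\Ga^{A\otimes \mathrm{id}}(X,X)=\begin{pmatrix} \Ga(x,x) & \Ga(x,y) \\ \Ga(y,x) & \Ga(y,y) \end{pmatrix}\quad \text{for } X=\begin{pmatrix} x & y \\ 0 & 0 \end{pmatrix}.
\]
Since $T_t$ is completely positive, so is $T_t\otimes \mathrm{id}_{M_2}$, and Kadison--Schwarz at the infinitesimal level yields $\Ga^{A\otimes \mathrm{id}}(X,X)\ge 0$. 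Alternatively, one can invoke Theorem \ref{deri}: in the JRS framework $\Ga(x,y)$ is realized as $E(\de(x)^*\de(y))$ for the canonical trace-preserving conditional expectation $E:\mx\to \pi(\nx)$, and the matrix inequality then follows from the manifest positivity of $\begin{pmatrix}\de(x)&\de(y)\end{pmatrix}^*\begin{pmatrix}\de(x)&\de(y)\end{pmatrix}$ after applying the CP map $E\otimes \mathrm{id}_{M_2}$.

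From matrix positivity, a standard Douglas-type factorization produces a contraction $K\in \nx$ with $\|K\|_\8\le 1$ such that
\[
\Ga(x,y)=\Ga(x,x)^{1/2}\,K\,\Ga(y,y)^{1/2}.
\]
Applying noncommutative H\"older with exponents $(2p,\8,2p)$ and the identity $\|f^{1/2}\|_{2p}=\|f\|_p^{1/2}$ for positive $f$, I would then conclude
\[
\|\Ga(x,y)\|_p\le \|\Ga(x,x)^{1/2}\|_{2p}\,\|K\|_\8\,\|\Ga(y,y)^{1/2}\|_{2p}\le \|\Ga(x,x)\|_p^{1/2}\|\Ga(y,y)\|_p^{1/2}.
\]

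The only delicate step is establishing the matrix positivity; once that is in hand the remainder is a routine H\"older estimate. I would prefer the amplification route, since it uses only the complete positivity already built into the definition of a noncommutative diffusion semigroup and avoids any reliance on the precise operator-level form of $\Ga$ via the derivation representation.
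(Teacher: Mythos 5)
Your argument is correct, but it is genuinely more self-contained than what the paper does: the paper disposes of the lemma in one line by citing the $p=1$ case from \cite{JZ12}*{Corollary 3.8} (which rests on the derivation representation of $\Ga$) and asserting that the general case ``follows from the same argument with the help of H\"older's inequality.'' You instead prove the operator-matrix positivity of $\bigl(\begin{smallmatrix}\Ga(x,x)&\Ga(x,y)\\ \Ga(y,x)&\Ga(y,y)\end{smallmatrix}\bigr)$ directly by amplifying to $\nx\overline{\otimes}M_2$ and differentiating the Kadison--Schwarz inequality $T_t(X^*X)\ge T_t(X)^*T_t(X)$ at $t=0$, then extract the factorization $\Ga(x,y)=\Ga(x,x)^{1/2}K\Ga(y,y)^{1/2}$ with $\|K\|_\8\le1$ and finish with H\"older in $L_p$; this is clean, constant-free, and uses nothing beyond complete positivity of the semigroup. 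The paper's route buys brevity and consistency with its other uses of the Junge--Ricard--Shlyakhtenko machinery; yours buys independence from that machinery, which matters because the version of Theorem \ref{deri} actually stated in the paper only records the \emph{trace} identity $\tau[\Ga(x,y)]=\tau[\de(x)^*\de(y)]$, not the operator-valued identity $\Ga(x,y)=E_\nx(\de(x)^*\de(y))$ that your alternative route would need --- so your stated preference for the amplification argument is the right call. The only points to tidy up are domain bookkeeping (one should work with $x,y$ in a core on which $x^*x$, $x^*y$, $y^*y$ lie in $\Dom(A)$ and the limit $\lim_{t\to0^+}t^{-1}[T_t(X^*X)-T_t(X)^*T_t(X)]$ exists), and a remark that the contraction $K$ produced by the Douglas factorization may a priori live in the weak closure, which is harmless since only $\|K\|_\8\le1$ enters the H\"older step.
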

\begin{proof}
The case $p=1$ was proved in \cite[Corollary 4.8]{JZ12}. The general case follows from the same argument with the help of H\"older's inequality.
\end{proof}
\begin{lemma}\label{gamin}
  Let $f_1,\cdots, f_m, g_1,\cdots, g_n\in \Dom(A)$ be self-adjoint elements. Assume \eqref{compa} holds. Then
  \[
  \tau[\Ga(f_1\cdots f_m, g_1\cdots g_n)] \le C\sum_{i=1}^m\sum_{j=1}^n \|\Ga(f_i, f_i)^{1/2}\|_{m+n}\|\Ga(g_j, g_j)^{1/2}\|_{m+n} \prod_{k\neq i, \ell\neq j} \|f_k\|_{m+n}\|g_\ell\|_{m+n}.
  \]
\end{lemma}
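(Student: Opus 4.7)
The plan is to translate the gradient form into the derivation $\de$ supplied by Theorem \ref{deri}, expand products via the Leibniz rule, and close with a single application of noncommutative Hölder's inequality in which every factor sits in $L_{m+n}(\mx)$.

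First, Theorem \ref{deri} gives
\[
\tau[\Ga(f_1\cdots f_m,\, g_1\cdots g_n)] \;=\; \tau\bigl[\de(f_1\cdots f_m)^*\,\de(g_1\cdots g_n)\bigr].
\]
Because $\de$ is a $\pi$-derivation, the Leibniz rule yields
\[
\de(f_1\cdots f_m)=\sum_{i=1}^{m}\pi(f_1)\cdots\pi(f_{i-1})\,\de(f_i)\,\pi(f_{i+1})\cdots\pi(f_m),
\]
and analogously for $g_1\cdots g_n$. Substituting, the trace expands into $mn$ terms indexed by $(i,j)$; each term is the trace of a product containing exactly $(m-i)+1+(i-1)+(j-1)+1+(n-j)=m+n$ elementary factors, namely one occurrence each of $\de(f_i)^*$ and $\de(g_j)$ together with $\pi(f_k)$ for $k\neq i$ and $\pi(g_\ell)$ for $\ell\neq j$.

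Next, apply the noncommutative Hölder inequality with all $m+n$ factors placed in $L_{m+n}(\mx)$, which is permissible since $(m+n)\cdot\frac{1}{m+n}=1$. Using that $\pi$ is a trace preserving $*$-homomorphism — so $\|\pi(f_k)\|_{m+n}=\|f_k\|_{m+n}$ and similarly for $g_\ell$ — the $(i,j)$-term is bounded by
\[
\|\de(f_i)\|_{m+n}\,\|\de(g_j)\|_{m+n}\prod_{k\neq i}\|f_k\|_{m+n}\prod_{\ell\neq j}\|g_\ell\|_{m+n}.
\]
Since $f_i$ and $g_j$ are self-adjoint, the second assertion of Theorem \ref{deri} upgrades this to
\[
\|\de(f_i)\|_{m+n}\le C\|\Ga(f_i,f_i)^{1/2}\|_{m+n},\qquad \|\de(g_j)\|_{m+n}\le C\|\Ga(g_j,g_j)^{1/2}\|_{m+n},
\]
and summing over $i$ and $j$ produces exactly the claimed inequality with a new absolute constant.

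The only real obstacle is bookkeeping: verifying that the total number of factors is precisely $m+n$ so that one uniform Hölder exponent works, and confirming that $f_1\cdots f_m\in\Dom(\de)$ — the latter follows from the closability of $\de$ together with the fact that the right-hand side of the Leibniz formula, estimated as above, is square-integrable. All the analytic substance is packaged inside Theorem \ref{deri}, so no further probabilistic input is required.
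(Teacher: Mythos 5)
Your proposal is correct and follows essentially the same route as the paper: apply Theorem \ref{deri} to rewrite $\tau[\Ga(\cdot,\cdot)]$ via the derivation, expand by the Leibniz rule into $mn$ terms, bound each by H\"older's inequality with all $m+n$ factors in $L_{m+n}$, and convert $\|\de(f_i)\|_{m+n}$ back to $\|\Ga(f_i,f_i)^{1/2}\|_{m+n}$ using Theorem \ref{deri} again. The only difference is cosmetic: you carry the $\pi$'s and the domain bookkeeping explicitly, which the paper suppresses.
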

\begin{proof}
  By \eqref{deri} and the derivation property, we have
  \[
  \tau[\Ga(f_1\cdots f_m, g_1\cdots g_n)] = \sum_{i,j}\tau[(f_1\cdots f_{i-1}\de(f_i)f_{i+1}\cdots f_m)^*(g_1\cdots g_{j-1}\de(g_j)g_{j+1}\cdots g_n)].
  \]
  Using H\"older's inequality, we obtain
  \begin{align*}
  &\tau[\Ga(f_1\cdots f_m, g_1\cdots g_n)] \\
  \le ~&\sum_{j,k} \|f_1\|_{m+n}\cdots \|f_{j+1}\|_{m+n}\|\de(f_j)\|_{m+n}\|f_{j-1}\|_{m+n}\cdots \|f_m\|_{m+n}\cdot\\
  &\|g_1\|_{m+n}\cdots \|g_{k-1}\|_{m+n}\|\de(g_k)\|_{m+n}\|g_{k+1}\|_{m+n}\cdots\|g_n\|_{m+n}.
  \end{align*}
  Now using \eqref{compa}, we complete the proof.
\end{proof}

Recall that $T_t$ is ergodic if the fixed point algebra of $T_t$ is trivial. Thus $E_{\Fix} (x) = \tau(x)$.
\begin{theorem}\label{spg1}
 Let $T_t$ be an ergodic standard semigroup acting on a diffuse probability space $(\nx,\tau)$ which satisfies \eqref{compa}. Suppose the generator $A$ of $T_t$ has a spectral gap: For $f\in \nx$,
$$\|f-\tau(f)\|_2\le C\max\{\|\Ga(f,f)^{1/2}\|_2, \|\Ga(f^*,f^*)^{1/2}\|_2\}.$$
Then for all even integer $p\ge 2$ and all $f\in \nx$,
\[
 \|f-\tau(f)\|_p\le C'p \max\{\|\Ga(f,f)^{1/2}\|_p, \|\Ga(f^*,f^*)^{1/2}\|_p\}.
\]
\end{theorem}
\begin{proof}
 Let $g\in\nx$ be a self-adjoint element and $p=2q$ be an even integer. Without loss of generality we may assume $\tau(g)=0$. Since $\nx$ is diffuse, there is no minimal projection in $\nx$. Let $g=\int_\rz t \dd E_t$ be the spectral decomposition, where $\dd E$ is the spectral measure of $g$. Then the scalar spectral measure $\tau\circ \dd E$ of $g$ is non-atomic. We can find a function $\sgn(\cdot): \spec(g)\to\{\pm 1\}$ such that $\tau[\sgn(g)g^{p/2}]=0$, where $\spec(g)$ denotes the spectrum of $g$. Let $f=\sgn(g)g^{p/2}$. Applying the spectral gap inequality on $f$, we have
\begin{equation}\label{spga}
  \tau(g^{2q})\le C^2 \tau[\Ga(g^{q}, g^q)].
\end{equation}
By Lemma \ref{gamin},
\[
\tau[\Ga(g^{q}, g^q)]\le C'q^2 \|g\|_{2q}^{2q-2}\|\Ga(g,g)^{1/2}\|_{2q}^2
\]
Hence,
\begin{equation}\label{sfad}
 \|g\|_{2q}\le C'q \|\Ga(g,g)^{1/2}\|_{2q}.
\end{equation}
For general mean zero element $f\in \nx$, write $f=\Re(f)+i \Im(f)$, where $\Re(f)=\frac{f+f^*}{2}$ and $\Im(f)=\frac{f-f^*}{2i}$. Using the triangle inequality and \eqref{sfad}, we obtain
\[
 \|f\|_p\le \|\Re(f)\|_p+\|\Im(f)\|_p\le C'q (\|\Ga(\Re(f),\Re(f))^{1/2}\|_p+\|\Ga(\Im(f),\Im(f))^{1/2}\|_p).
\]
By Lemma \ref{gfin}, we find
\begin{align*}
  \|\Ga(\Re(f),\Re(f))^{1/2}\|_p &=\|\Ga(\Re(f),\Re(f))\|_q^{1/2}\\
  &=\frac12[\|\Ga(f,f)\|_q+2\|\Ga(f,f^*)\|_q+\|\Ga(f^*,f^*)\|_q ]^{1/2}\\
&\le \frac12[\|\Ga(f,f)\|_q+2\|\Ga(f,f)\|_q^{1/2} \|\Ga(f^*,f^*)\|_q^{1/2}+\|\Ga(f^*,f^*)\|_q ]^{1/2}\\
&
\le\frac1{{2}} [\|\Ga(f,f)\|_q^{1/2}+\|\Ga(f^*,f^*)\|_q^{1/2}].
\end{align*}
Similar argument applies to $\|\Ga(\Im(f),\Im(f))^{1/2}\|_p$ and the proof is complete.
\end{proof}
Note that the diffuse and ergodic assumptions are indispensable in the above argument. We provide some results without these assumptions in the following.
\begin{theorem}\label{2pow}
   Let $T_t$ be a standard semigroup acting on a probability space $(\nx,\tau)$ with \eqref{compa} such that
   \begin{equation}\label{extau}
   \|E_{\Fix} g\|_2\le C_1\tau(g)
    \end{equation}
    for all $g\ge0$. Suppose the spectral gap inequality holds: For $f\in \nx$,
$$\|f-E_{\Fix}(f)\|_2\le C_2\max\{\|\Ga(f,f)^{1/2}\|_2, \|\Ga(f^*,f^*)^{1/2}\|_2\}.$$
Then we have for all $f\in \nx$ and $k\in \nz$,
\[
 \|f-E_{\Fix}(f)\|_{2^{k}}\le C_3 2^{k} \max\{\|\Ga(f,f)^{1/2}\|_{2^k}, \|\Ga(f^*,f^*)^{1/2}\|_{2^k}\}.
\]
\end{theorem}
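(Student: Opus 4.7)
My plan is an $L_2$-to-$L_{2^k}$ Moser-style iteration that parallels the proof of Theorem \ref{spg1}, with the non-ergodic fixed-point contribution controlled by the extra hypothesis \eqref{extau}. I would first reduce to the case where $f$ is self-adjoint and $E_{\Fix}(f)=0$: writing $f=\Re(f)+i\Im(f)$ and combining the triangle inequality with Lemma \ref{gfin}, exactly as in the last part of the proof of Theorem \ref{spg1}, controls $\|f-E_{\Fix}(f)\|_p$ by a constant times $\|\Re(f)-E_{\Fix}(\Re f)\|_p+\|\Im(f)-E_{\Fix}(\Im f)\|_p$ and bounds the relevant gradient forms by $\max\{\|\Gamma(f,f)^{1/2}\|_p,\|\Gamma(f^*,f^*)^{1/2}\|_p\}$.

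The proof then proceeds by induction on $k\ge 1$, with base case $k=1$ coming directly from the spectral gap (valid as long as $C_3\ge C_2/2$). For the inductive step $k-1\to k$, I apply the $L_2$ spectral gap to $h:=f^{2^{k-1}}$, which is self-adjoint (and positive for $k\ge 2$). Using the $L_2$-orthogonality $\|h\|_2^2=\|h-E_{\Fix}(h)\|_2^2+\|E_{\Fix}(h)\|_2^2$, the hypothesis $\|E_{\Fix}(h)\|_2\le C_1\tau(h)=C_1\|f\|_{2^{k-1}}^{2^{k-1}}$, the inductive hypothesis applied to $\|f\|_{2^{k-1}}$, Lemma \ref{gamin} with $m=n=2^{k-1}$ to bound $\tau[\Gamma(h,h)]\le C(2^{k-1})^2\|\Gamma(f,f)^{1/2}\|_{2^k}^{2}\|f\|_{2^k}^{2^k-2}$, and the monotonicity $\|\Gamma(f,f)^{1/2}\|_{2^{k-1}}\le\|\Gamma(f,f)^{1/2}\|_{2^k}$, I arrive for $y:=\|f\|_{2^k}/\|\Gamma(f,f)^{1/2}\|_{2^k}$ at a scalar inequality
\begin{equation*}
y^{2^k}\le C_2^2 C\,(2^{k-1})^2\, y^{2^k-2}+C_1^2(C_3\,2^{k-1})^{2^k}.
\end{equation*}
The elementary splitting lemma (``$y^p\le Ay^{p-2}+B$ implies $y\le\max\{\sqrt{2A},(2B)^{1/p}\}$'') gives
\begin{equation*}
y\le \max\bigl\{C_2\sqrt{2C}\cdot 2^{k-1},\;(2C_1^2)^{1/2^k}C_3\,2^{k-1}\bigr\}.
\end{equation*}

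For the first branch, $y\le C_3\cdot 2^k$ as soon as $C_3\ge C_2\sqrt{C/2}$. For the second, $y\le C_3\cdot 2^k$ provided $(2C_1^2)^{1/2^k}\le 2$, i.e.\ $2^k\ge 1+\log_2 C_1^2$. The main obstacle I anticipate is precisely this small-$k$ regime: when $C_1$ is large, the factor $(2C_1^2)^{1/2^k}$ can exceed $2$ for the first few values of $k$, so the linear bound does not propagate with a fixed constant in a single step. This is however harmless, because the total multiplicative inflation across all levels is at most $\prod_{j\ge 1}(2C_1^2)^{1/2^j}=(2C_1^2)^{\sum_{j\ge 1} 1/2^j}=2C_1^2$, a finite constant depending only on $C_1$. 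I would therefore absorb this factor into the base constant and take $C_3$ to be any multiple of $\max\{C_2/2,C_2\sqrt{C/2}\}$ by $2C_1^2$, and check that with this $C_3$ the recursion closes for every $k\ge 1$, completing the induction.
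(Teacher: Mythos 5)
Your proposal is correct and follows essentially the same route as the paper's proof: induction on $k$ by applying the $L_2$ spectral gap to $f^{2^{k-1}}$, controlling the fixed-point term via \eqref{extau}, invoking Lemma \ref{gamin} for the gradient of the power, and closing the scalar recursion with the same two-case splitting; your product bound $\prod_{j\ge1}(2C_1^2)^{1/2^j}$ for the accumulated constant is exactly the paper's uniform bound on $B_k=C_2\prod_{j=1}^{k-1}(\sqrt{2}C_1)^{1/2^j}$. No substantive differences.
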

\begin{proof}
  By the same argument as for Theorem \ref{spg1}, it suffices to consider the self-adjoint element $f$. Since $\Ga(f-E_{\Fix}f, f-E_{\Fix}f)=\Ga(f,f)$, we may assume $E_{\Fix} f=0$. Note that $k=1$ is the spectral gap inequality. We proceed by induction. Assume
  \[
  \|f\|_{2^{k}}\le A_k 2^{k} \|\Ga(f,f)^{1/2}\|_{2^k},
  \]
  where $A_k$ is the best constant. Applying the spectral gap inequality to $f^{2^k}$ and using the assumption \eqref{extau}, we have
  \[
  \|f\|_{2^{k+1}}^{2^{k+1}}\le C_2^2\tau[\Ga(f^{2^k},f^{2^k})]+C_1^2\tau(f^{2^k})^2.
  \]
  By Lemma \ref{gamin} and the induction hypothesis,
  \[
  \|f\|_{2^{k+1}}^{2^{k+1}}\le CC_2^2 2^{2k}\|\Ga(f,f)^{1/2}\|_{2^{k+1}}^2\|f\|_{2^{k+1}}^{2^{k+1}-2}+C_1^2 A_k^{2^{k+1}} 2^{k 2^{k+1}}\|\Ga(f,f)^{1/2}\|_{2^{k}}^{2^{k+1}}=: I+II.
  \]
  Suppose $I\le II$. Since $\tau(1)=1$, we have
  \[
  \|f\|_{2^{k+1}}^{2^{k+1}}\le 2C_1^2 A_k^{2^{k+1}} 2^{k 2^{k+1}}\|\Ga(f,f)^{1/2}\|_{2^{k+1}}^{2^{k+1}}.
  \]
  It follows that $A_{k+1}\le (\sqrt{2}C_1)^{1/2^{k}}2^{-1} A_k$. Suppose $II\le I$. Then
  \[
  \|f\|_{2^{k+1}}^{2^{k+1}}\le 2CC_2^2 2^{2k}\|\Ga(f,f)^{1/2}\|_{2^{k+1}}^2\|f\|_{2^{k+1}}^{2^{k+1}-2}.
  \]
  We have
  \[
  \|f\|_{2^{k+1}}\le \frac{1}{\sqrt{2}}\sqrt{C}C_2 2^{k+1}\|\Ga(f,f)^{1/2}\|_{2^{k+1}}.
  \]
  Hence $A_{k+1}\le \sqrt{C}C_2/\sqrt{2}$. It follows that
  $$A_{k+1}\le \max\{(\sqrt{2}C_1)^{1/2^{k}}2^{-1} A_k, \sqrt{C} C_2/\sqrt{2}\}\le \max\{(\sqrt{2}C_1)^{1/2^{k}} A_k, \sqrt{C}C_2/\sqrt{2}\}.
   $$
  Note that we may assume without loss of generality $\sqrt{2} C_1\ge 1$ and  $C_1\ge \sqrt{C}/2$. Since we may take $A_1=C_2$, inductively we have $(\sqrt{2}C_1)^{1/2^{k}} A_k\ge \sqrt{C}C_2/\sqrt{2}$ for all $k\in\nz$. Let
  $$B_k= C_2\prod_{j=1}^{k-1} (\sqrt{2}C_1)^{1/2^{j}}.
  $$
  Since $\log B_k \le \log(\sqrt{2}C_1C_2)$, $A_k\le B_k$ is uniformly bounded. The proof is complete.
\end{proof}
To state a result for arbitrary $p$, let us recall the $L_p$ regularity of Dirichlet forms due to Olkiewicz and Zegarlinski \cite[Theorem 5.5]{OZ99}. In our context, their result implies (\cite[(22)]{OZ99}) that
\[
 \tau(f^{p/2}Af^{p/2}) \le \frac{p^2}{4(p-1)} \tau(f^{p-1} A f)
\]
for positive $f$ in the domain of the Dirichlet form on the right-hand side and $1<p<\8$. By \cite{CS03}, we know that $\tau(fAg)=\tau[\Ga(f, g)]$. It follows that
\begin{equation}\label{gampp}
 \tau[\Ga(f^{p/2},f^{p/2})]\le \frac{p^2}{4(p-1)}\tau[\Ga(f,f^{p-1})]
\end{equation}
for $f\ge0$.
\begin{theorem}
  Under the assumptions of Theorem \ref{2pow}, for $p\ge 2$, there exists a finite set $F_p\subset[1,2)$ determined by $p$, such that for every self-adjoint element $f\in\nx$ with $E_{\Fix}(f)=0$, we have
  \begin{equation}\label{pcral}
     \|f \|_{p}\le C' \max\left\{ \max_{\al\in F_p} ~ p^{1/\al}\|\Ga(|f|^{\al},|f|^{\al})^{1/2}\|_{p/\al}^{1/\al}, ~p\|\Ga(f,f)^{1/2}\|_p\right\}.
  \end{equation}
\end{theorem}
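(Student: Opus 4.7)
The plan is to proceed by strong induction on $p$, following the scheme of Theorem~\ref{2pow} but with the integer power $f^{2^{k}}$ replaced by the positive operator $|f|^{p/2}$, and using the Olkiewicz--Zegarlinski $L_{p}$-regularity \eqref{gampp} together with the derivation realization of $\Ga$ from Theorem~\ref{deri} to accommodate non-integer exponents. Since $|f|^{p/2}$ is a positive self-adjoint element of $\nx$, the spectral gap inequality applied to $|f|^{p/2}-E_{\Fix}(|f|^{p/2})$, combined with Pythagoras in $L_{2}$ and the hypothesis \eqref{extau} applied to $g=|f|^{p/2}$, yields
\[
\|f\|_{p}^{p}=\||f|^{p/2}\|_{2}^{2}\le C_{1}^{2}\tau(|f|^{p/2})^{2}+C_{2}^{2}\,\tau[\Ga(|f|^{p/2},|f|^{p/2})]=C_{1}^{2}\|f\|_{p/2}^{p}+C_{2}^{2}\,\tau[\Ga(|f|^{p/2},|f|^{p/2})].
\]
The first term is absorbed by the induction hypothesis at the smaller exponent $p/2$ (and we declare $F_{p/2}\subset F_{p}$); the base case $p\in[2,4)$ is immediate from the $L_{2}$ spectral gap applied to $f$ itself, $\|f\|_{p/2}\le\|f\|_{2}\le C_{2}\|\Ga(f,f)^{1/2}\|_{2}\le C_{2}\|\Ga(f,f)^{1/2}\|_{p}$, which is the source of the $p\,\|\Ga(f,f)^{1/2}\|_{p}$ term in the maximum of \eqref{pcral}.

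For the gradient term I would invoke \eqref{gampp} with parameter $p$ applied to the positive element $|f|$,
\[
\tau[\Ga(|f|^{p/2},|f|^{p/2})]\le\tfrac{p^{2}}{4(p-1)}\,\tau[\Ga(|f|,|f|^{p-1})]=\tfrac{p^{2}}{4(p-1)}\,\tau[\de(|f|)^{*}\de(|f|^{p-1})],
\]
and then expand $\de(|f|^{p-1})$ via the Leibniz rule for $\de$ into a finite sum of pieces of the form $\pi(|f|^{a})\,\de(|f|^{\al})\,\pi(|f|^{b})$ with $\al\in[1,2)$, by writing $p-1$ as an integer multiple of a fixed $\al$ plus a bounded residual $|f|^{\rho}$ with $\rho\in[0,1)$ which is handled by a secondary application of \eqref{gampp} at an auxiliary parameter. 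A H\"older inequality with exponents chosen so that the gradient factor lands in $L_{p/\al}$, followed by Theorem~\ref{deri} to identify $\|\de(|f|^{\al})\|_{p/\al}$ with $\|\Ga(|f|^{\al},|f|^{\al})^{1/2}\|_{p/\al}$, then yields a bound of the shape
\[
\tau[\Ga(|f|^{p/2},|f|^{p/2})]\le C_{3}\,p^{2}\,\|f\|_{p}^{p-2\al}\,\|\Ga(|f|^{\al},|f|^{\al})^{1/2}\|_{p/\al}^{2}
\]
for each $\al$ that appears in the decomposition. The collection of all such $\al$ (together with $F_{p/2}$) forms the finite set $F_{p}\subset[1,2)$.

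Putting these together exactly as in the induction step of Theorem~\ref{2pow}, we find $\|f\|_{p}^{p}$ dominated by the maximum of $C_{1}^{2}\|f\|_{p/2}^{p}$ and $C_{3}\,p^{2}\,\|f\|_{p}^{p-2\al}\,\|\Ga(|f|^{\al},|f|^{\al})^{1/2}\|_{p/\al}^{2}$ over $\al\in F_{p}$. The first alternative closes by induction at $p/2$; in the second alternative we rearrange
\[
\|f\|_{p}^{2\al}\le C_{3}\,p^{2}\,\|\Ga(|f|^{\al},|f|^{\al})^{1/2}\|_{p/\al}^{2}
\]
to extract $\|f\|_{p}\le C'\,p^{1/\al}\,\|\Ga(|f|^{\al},|f|^{\al})^{1/2}\|_{p/\al}^{1/\al}$, which is precisely one of the terms on the right-hand side of \eqref{pcral}.

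The main obstacle is the Leibniz/H\"older expansion of $\de(|f|^{p-1})$ when $p-1$ is not a clean integer multiple of a single $\al\in[1,2)$: the residual factor $\de(|f|^{\rho})$ with $\rho\in[0,1)$ must be absorbed by a secondary application of \eqref{gampp} at an auxiliary parameter, which generates additional elements of $F_{p}$. Ensuring that this cascade terminates with a finite set $F_{p}\subset[1,2)$, while correctly tracking the power of $p$ through each H\"older step so that the scaling $p^{1/\al}$ survives in the final bound, is the central technical point of the argument.
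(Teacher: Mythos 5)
Your architecture is the paper's: induct from $p/2$ to $p$ (over dyadic blocks), split $\||f|^{p/2}\|_2^2$ by Pythagoras into a fixed-point part controlled by \eqref{extau} plus the induction hypothesis and a gradient part controlled by the Olkiewicz--Zegarlinski regularity \eqref{gampp}, then expand via the Leibniz rule for $\de$, apply H\"older and Theorem \ref{deri}, and close with a two-case comparison that either rearranges $\|f\|_p^{2\al}\le Cp^2\|\Ga(|f|^\al,|f|^\al)^{1/2}\|_{p/\al}^2$ or defers to the exponent $p/2$. The base case and the origin of the $p\|\Ga(f,f)^{1/2}\|_p$ term are also as in the paper.

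The one genuine gap is exactly the point you flag and leave open: your decomposition of $p-1$ as an integer multiple of a fixed $\al\in[1,2)$ plus a residual $\rho\in[0,1)$ manufactures a factor $\de(|f|^{\rho})$ with exponent below $1$, which you then propose to treat by a ``secondary application of \eqref{gampp} at an auxiliary parameter,'' generating a cascade whose termination you cannot guarantee. This cascade never needs to occur. The paper uses the opposite normalization: write $p-1=k+\al$ with $k=[p-2]$ an integer and $\al=p-1-k\in[1,2)$, i.e.\ factor $|f|^{p-1}=|f|\cdots|f|\cdot|f|^{\al}$ as $k$ unit-exponent factors times a single factor of exponent $\al\ge1$. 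The Leibniz expansion of $\de(|f|^k|f|^{\al})$ then consists of $k$ terms containing $\de(|f|)$ and one term containing $\de(|f|^{\al})$; every exponent that appears lies in $[1,2)$, so Theorem \ref{deri} and H\"older apply directly, yielding
\[
\tau[\de(|f|)^*\de(|f|^k|f|^{\al})]\le C^2\bigl[(p-1-\al)\lx_{p,1}^2+\lx_{p,1}\lx_{p,\al}\bigr]\le C^2(p-\al)\max\{\lx_{p,1}^2,\lx_{p,\al}^2\},
\]
where $\lx_{p,\al}=\|\Ga(|f|^{\al},|f|^{\al})^{1/2}\|_{p/\al}\,\|f\|_p^{p/2-\al}$ --- precisely the shape you wanted. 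With this choice the set $F_p$ grows by at most one element per halving step, $F_p=F_{p/2}\cup\{\al\}$, so finiteness is automatic after $O(\log p)$ steps and no auxiliary parameters arise. Two smaller points you should also make explicit: when the fixed-point term dominates and you invoke the induction hypothesis at $p/2$, the resulting norms $\|\cdot\|_{p/(2\al)}$ must be enlarged to $\|\cdot\|_{p/\al}$ (harmless on a probability space), and the constants must be tracked through the recursion as in Theorem \ref{2pow} to see that they remain uniformly bounded in $p$.
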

\begin{proof}
 We argue by induction on $n$ for $2^n\le p\le 2^{n+1}$. Let $2\le p<\8$. By the spectral gap inequality, we have
\[
 \|f\|_p^p=\||f|^{p/2}\|_2^2\le C_2^2\tau[\Ga(|f|^{p/2}, |f|^{p/2})] + \|E_{\Fix}(|f|^{p/2})\|_2^2.
\]
Using \eqref{deri} and \eqref{gampp}, we have
\begin{align}\label{spgap}
 \tau[\Ga(|f|^{p/2}, |f|^{p/2})]\le \frac{p^2}{4(p-1)}\tau[\de(|f|)^*\de(|f|^k|f|^\al)],
\end{align}
where $[p-2]=k$, $p=k+1+\al$ and $1\le \al<2$. Using the derivation property, H\"older's inequality and \eqref{compa},
\begin{align*}
 &\tau[\de(|f|)^*\de(|f|^k|f|^\al)]=\sum_{j=1}^k\tau[\de(|f|)^*|f|^{k-j}\de(|f|)|f|^{j-1}|f|^{\al}]+\tau[\de(|f|)^*|f|^k\de(|f|^\al)]\\
&\le \sum_{j=1}^k \|\de(|f|)^*\|_p\|f\|_p^{k-j}\|\de(|f|)\|_p\|f\|_p^{j-1+\al} + \|\de(|f|)^*\|_p \|f\|_p^k\|\de(|f|^\al)\|_{p/\al}\\
&\le C^2[ k\|\Ga(|f|,|f|)^{1/2}\|_p^2 \|f\|_p^{k-1+\al}+ \|\Ga(|f|,|f|)^{1/2}\|_p\|f\|_p^k \|\Ga(|f|^\al, |f|^{\al})^{1/2}\|_{p/\al}].
\end{align*}
Let $\lx_{p,\al}=\|\Ga(|f|^\al,|f|^\al)^{1/2}\|_{p/\al}\|f\|_p^{p/2-\al}$. Noticing the relationship among $p,k,\al$, we find
\begin{equation}\label{lpal}
   \tau[\de(|f|)^*\de(|f|^k|f|^\al)]\le C^2[(p-1-\al) \lx_{p,1}^2+\lx_{p,1}\lx_{p,\al}] \le C^2(p-\al)\max\{\lx_{p,\al}^2, \lx_{p,1}^2\}.
\end{equation}
On the other hand, by assumption \eqref{extau}, we have for $2\le p\le 4$,
\begin{align*}
 \|E_{\Fix} (|f|^{p/2})\|_2^2 &\le C_1^2\tau(|f|^{p/2})^2\le C_1^2\|f\|_2^p\\
&\le C_1^2 C_2^p \|\Ga(f,f)^{1/2}\|_2^p\le C_1^2 C_2^p \|\Ga(f,f)^{1/2}\|_p^p.
\end{align*}
Plugging into \eqref{spgap}, we have
\[
 \|f\|_p^p\le \frac{C^2C_2^2 p^2}{4} \max\{\lx_{p,\al}^2, \lx_{p,1}^2\}+C_1^2 C_2^p \|\Ga(f,f)^{1/2}\|_p^p=:I+II.
\]
If $I\le II$, we have $\|f\|_p\le 2^{1/p}C_1^{2/p}C_2 \|\Ga(f,f)^{1/2}\|_p$. Suppose $II\le I$. Let $F_p=\{1,\al\}$. We may find $\al_0\in F_p$ such that $\lx_{p,\al_0}^2=\max\{\lx_{p,\al}^2, \lx_{p,1}^2\}$. It follows that
\begin{equation*}
   \|f\|_p^{2\al_0}\le \frac{C^2C_2^2 p^2}{2} \|\Ga(|f|^{\al_0},|f|^{\al_0})^{1/2}\|_{p/\al_0}^2.
\end{equation*}
Hence, we find
\begin{equation}\label{alph0}
 \|f\|_p\le \frac{(CC_2)^{1/\al_0}p^{1/\al_0}}{2^{1/(2\al_0)}}\|\Ga(|f|^{\al_0},|f|^{\al_0})^{1/2}\|_{p/\al_0}^{1/\al_0} \le C' \max_{\al
  \in F_p} p^{1/\al}\|\Ga(|f|^{\al},|f|^{\al})^{1/2}\|_{p/\al}^{1/\al}.
\end{equation}
We have proven \eqref{pcral} for $2\le p\le 4$. Assume \eqref{pcral} holds for $2^{n-1}\le p\le 2^n$ and let $A_n$ denote the best constant. By assumption \eqref{extau} and the induction hypothesis,
\begin{align*}
  &\|E_{\Fix} (|f|^{p/2})\|_2^2 \le C_1^2\|f\|_{p/2}^p \\
&\le C_1^2 A_n^p\max\Big\{ \max_{\al\in F_{p/2}} \Big(\frac{p}2\Big)^{p/\al}\|\Ga(|f|^{\al}, |f|^{\al})^{1/2}\|^{p/\al}_{p/(2\al)}, ~\Big(\frac{p}2\Big)^p\|\Ga(f,f)^{1/2}\|_{p/2}^p\Big\}=:III.
\end{align*}
Combining with \eqref{spgap} and \eqref{lpal}, we get
\[
\|f\|_p^p\le \frac{C^2C_2^2 p^2}{4} \max\{\lx_{p,\al}^2, \lx_{p,1}^2\} + III = I+III.
\]
If $I\ge III$, we get \eqref{alph0} as above. In this case, we may take $F_p=\{1,\al\}$ and
\[
A_{n+1}\le \sup_{1\le \al\le 2}{2^{-1/(2\al)}}{(CC_2)^{1/\al}}.
\]
Now suppose $I\le III$. Then
\begin{align*}
  \|f\|_p &\le 2^{1/p}C_1^{2/p} A_n\max\Big\{ \max_{\al
  \in F_{p/2}} \Big(\frac{p}2\Big)^{1/\al}\|\Ga(|f|^{\al}, |f|^{\al})^{1/2}\|^{1/\al}_{p/(2\al)}, ~\frac{p}2 \|\Ga(f,f)^{1/2}\|_{p/2}\Big\}\\
  &\le 2^{1/p}C_1^{2/p} A_n\max\Big\{ \max_{\al
  \in F_{p/2}} {p}^{1/\al}\|\Ga(|f|^{\al}, |f|^{\al})^{1/2}\|^{1/\al}_{p/\al}, ~{p} \|\Ga(f,f)^{1/2}\|_{p}\Big\}.
\end{align*}
In this case, we may take $F_p=F_{p/2}$ and $A_{n+1}\le 2^{1/p}C_1^{2/p} A_n$. Combining together, we may set $F_p=F_{p/2}\cup \{\al\}$ and $A_{n+1}\le \max\{\sup_{1\le \al\le 2}(CC_2/\sqrt{2})^\al, (2C_1^2)^{1/2^n}A_n\}$. We may assume without loss of generality $\sup_{1\le \al\le 2}(CC_2/\sqrt{2})^\al\le  (2C_1^2)^{1/2}A_1$ and $2C_1^2\ge1$. Thus inductively
\[
\sup_{1\le \al\le 2}(C_2/\sqrt{2})^\al\le (2C_1^2)^{1/2^n}A_n.
\]
By the same argument as for Theorem \ref{2pow}, $A_n$ is uniformly bounded and the proof is complete.
\end{proof}
\begin{rem}
  It is not difficult to check that the assumption \eqref{extau} is satisfied if the fixed point algebra of $T_t$ is finite dimensional. The constant $C_1$ depends on the dimension of the fixed point algebra and the trace on this algebra. In fact, finite dimensional von Neumann algebras are of the form $\oplus_{i=1}^r M_{n_i}$, where $M_{n_i}$ is the matrix algebra of dimension $n_i^2$. For simplicity, let us illustrate the case $\Fix=M_n$. For $x\in M_n$, $$\|x\|_2=\Big[\frac1{n}tr(x^*x)\Big]^{1/2}=\frac1{\sqrt{n}}\sum_{i=1}^n s_i^2,
  $$
  where $tr$ is the usual trace on $M_n$, and $s_i$'s are the singular values of $x$. Similarly, $\|x\|_1=\frac1n\sum_{i=1}^n s_i$. Then $\|x\|_2\le \sqrt{n}\|x\|_1$. Hence, for $g\in\nx$,
  \[
  \|E_{\Fix} g\|_2\le \sqrt{n}\|E_{\Fix}g\|_1\le \sqrt{n}\|g\|_1.
  \]
  The general form $\Fix=\oplus_{i=1}^r M_{n_i}$ is slightly more complicated and we leave it to the interested reader.
\end{rem}
\begin{rem}
  Although it looks complicated, the inequality \eqref{pcral} is actually consistent with that in the classical diffusion theory. To simplify our calculation, let us consider the one-variable functions and assume $\Ga(f,f)^{1/2}=|f'|$. Assume further that $|f|$ is differentiable and $\int f d\mu=0$. Then $f'(x)=0$ for $f(x)=0$. For example, $f(x)=x^2 \sgn(x)$ defined on the Gaussian probability space $(\rz,\ga)$ satisfies these conditions. Since $\Ga(f,f)^{1/2}=\Ga(|f|,|f|)^{1/2}$ in this setting, we only need to consider the first term in \eqref{pcral}. By H\"older's inequality, we get
  \[
  \|(|f|^\al) '\|_{p/\al}^{1/\al}\le \al^{1/\al}\||f|^{\al-1}|f|'\|_{p/\al}^{1/\al} \le \al^{1/\al}\|f\|_p^{1-1/\al}\||f|'\|_p^{1/\al}.
  \]
  After choosing the optimal $\al$, we have
  \[
  \|f\|_p\le C \al p \||f|'\|_{p}=C \al p \|f'\|_{p},
  \]
  which is exactly the classical result deduced from the spectral gap inequality as in \cite{Mi09}. In general, $|f|$ may not be differentiable at the zeros of $f$ even if $f$ is smooth. In this case, one may use a smoothening procedure by convolution to deduce similar results.
\end{rem}

\section{Examples and illustrations}
As explained above, the spectral gap may lead to the $L_p$ Poincar\'e inequalities with constant $Cp$ under certain conditions. Our first example illustrates that even in the classical diffusion setting one can not achieve $C\sqrt{p}$ assuming only the existence of spectral gap.

\begin{exam}[Spectral gap is not sufficient]\label{sgap}
Consider the double exponential distribution on $\rz$ given by $\mu(\dd x)=\frac12 e^{-|x|}\dd x$. There exists a semigroup $T_t$ which is symmetric on $L_2(\rz, \mu)$ with generator given by
$$-A= \frac{\dd^2}{\dd x^2}-\sgn(x)\frac{\dd}{\dd x}$$
on compactly supported smooth functions $f$ with $f'(0)=0$, where $\sgn(x)$ is the sign of $x$. Clearly such functions are dense in $L_2(\rz,\mu)$. It was shown in \cite{BL97} that $\mu$ satisfies the $L_2$ Poincar\'e inequality. However, it is easy to see that the $L_p$ Poincar\'e inequalities \eqref{pcr2} cannot hold by testing $f(x)=x$. By \eqref{impl}, the semigroup $(T_t)$ has to fail the Bakry--Emery $\Ga_2$-criterion. In this way, one can come up with a family of diffusion processes for which Bakry--Emery's condition fails. Indeed, let $\mu_\al(x)=\frac1{C_\al}e^{-|x|^\al}\dd x$ for $1\le \al<2$ on $\rz$ where $C_\al$ is a normalizing constant. Consider
$$-A_\al= \frac{\dd^2}{\dd x^2}-\al |x|^{\al-1}\sgn(x) \frac{\dd}{\dd x}$$
on compactly supported smooth functions $f$ with $f'(0)=0$. $-A_\al$ generates a symmetric semigroup $T_t^\al$ on $L_2(\rz,\mu_\al)$. The corresponding Markov process is a diffusion process. All these $T_t^\al$ for $1\le \al<2$ will fail \eqref{pcr2}, and thus fail Bakry--Emery's criterion. In fact, in this case, $\Ga(f,f)(x)=|f'(x)|^2$ but $\Ga_2(f,f)(x)=\al(\al-1)|x|^{\al-2} |f'(x)|^2$ for $x\neq 0$. Observe that we have only  $\Ga_2(f,f)\ge0$, but the spectral gap still exists by the same argument as for \cite[Lemma 2.1]{BL97}. Hence by, e.g., \cite[Proposition 2.5]{Mi09}, $T_t^\al$ satisfies the $L_p$ Poincar\'e inequalities with constants $Cp$.
\end{exam}
Our second example is meant to clarify the subgaussian behavior we discuss here via $L_p$ Poincar\'e inequalities is a condition on the semigroup (or its generator), not on the (noncommutative) probability space. This justifies the notion of subgaussian 1-cocycles.
\begin{exam}
  Consider the exponential distribution on $[0,+\8)$ given by $\mu(\dd x)=e^{-x}\dd x$. By \cite{KS85}, there is a conservative Markov semigroup which is symmetric in $L_2([0,\8),\mu)$ with generator $-A=x\frac{\dd^2}{\dd x^2}+(1-x)\frac{\dd}{\dd x}$. A calculation shows that
  $$\Ga_2(f,f)(x)- \Ga(f,f)(x)=(xf''(x)+f'(x)/2)^2+f'(x)^2/4\ge 0$$
  for all compactly supported smooth functions $f$. Since $-A$ generates a diffusion process, by \cite{AS94, JZ12}, we have \eqref{pcr2}. Note that the exponential distribution is not subgaussian in the sense of \cite{Ver} because $\|X\|_p=\Ga(p+1)^{1/p}\sim p$ where the law of $X$ is $\mu$. This means that the semigroups could satisfy the subgaussian Poincar\'e inequalities even though its invariant measure is not a subgaussian distribution. Roughly speaking, the gradient form in \eqref{pcr2} will provide another factor which compensates the factor $\sqrt{p}$. For instance, in our example here, if $f(x)=x$, then $\Ga(f,f)^{1/2}=\sqrt{x}$.
\end{exam}
\begin{rem}
  The above examples showed that the $L_p$ Poincar\'e inequalities provide more information than the moment estimates of probability measures. Indeed, the exponential distribution and the double exponential distribution have the same decay at $+\infty$. But there exist different semigroups such that the $L_p$ Poincar\'e inequalities \eqref{pcr2} may or may not hold.

  It is also interesting to compare \eqref{pcr2} and the log-Sobolev inequality in deducing concentration inequalities. On one hand, it is known (see \cite{AS94}) that log-Sobolev inequality implies \eqref{pcr2} in the classical diffusion setting while it was shown in \cite{JZ12} that in general non-diffusion situation, \eqref{pcr2} may still hold when the log-Sobolev inequality fails.  One can deduce concentration results from \eqref{pcr2}. On the other hand, although the spectral gap itself is not sufficient to give the $L_p$ Poincar\'e inequalities \eqref{pcr2} as shown in Example \ref{sgap}, Bobkov and Ledoux showed in \cite{BL97} that the exponential distribution satisfies a modified version of log-Sobolev inequality. From here, they proved concentration inequalities (see also \cite{BG}). It seems from the above discussion that the log-Sobolev inequality and the $L_p$ Poincar\'e inequalities are both useful in their own right and cannot entirely replace each other.
\end{rem}

Our theorems apply to a number of 1-cocycles on groups, including the free groups, finite cyclic groups, discrete Heisenberg groups, etc. See \cite[Section 5]{JZ12} for precise 1-cocycles on these groups and other examples. As explained in the Introduction, the Poincar\'e type inequalities there have been improved to the desirable form. We recapitulate and extend some interesting examples below for the reader's convenience. Recall that the Gromov form $K(g,h)=\frac12[\psi(g)+\psi(h)-\psi(g^{-1}h)]$.

\begin{exam}[Word length on free groups]\label{exfre}
Let $\fz_r$ be the free group with $r$ generators. Let $\psi(g)=|g|$ be the word length of $g\in \fz_r$ in the Cayley graph of $\fz_r$. By Haagerup's result \cite{Haa}, $\psi$ is conditionally negative. It was proved in \cite[Proposition 5.5]{JZ12} that $\Ga_2- \Ga$ is a positive semidefinite form on $\cz(\fz_r)$. Hence, the $\Ga_2$-criterion holds.
\end{exam}

\begin{exam}[Poisson semigroup on $L(\zz^r)$]
Let $\zz^r$ be the $r$ dimensional integer lattice. For $k\in \zz^r$, let $\psi(k)=\sum_{i=1}^r |k_i|$.  It is easy to check from the definition that $\psi$ is a cn-length function. The semigroup generated by $\psi$ is just the Poisson semigroup on the group von Neumann algebra $L(\zz^r)=L_\8(\tz^r)$. It was proved in \cite[Proposition 5.9]{JZ12} that $\Ga_2-\Ga$ is a positive semidefinite form from which the $\Ga_2$-criterion follows. The proof is based on the fact that $\zz$ is the free group with one generator.
\end{exam}

\begin{exam}[Discrete Heisenberg group]\label{exhei}
Let $H_3(\zz)$ be the discrete Heisenberg group over $\zz$. The multiplication here is given by
\begin{equation}\label{hemul}
(a,b,c)(a',b',c')=(a+a'+bc', b+b', c+c'), \quad (a,b,c), (a',b',c')\in H_3(\zz).
\end{equation}
Define $\psi(a,b,c)=|b|+|c|$, where $|b|$ is the absolute value of $b\in \zz$. We claim that $\psi$ is a cn-length function and satisfies $\Ga_2\ge \Ga$ on $\cz[H_3(\zz)]$. Indeed, it follows from the definition that $\psi$ is a cn-length function. By Example \ref{exfre}, the Gromov form $K$ associated to the word length on $\zz$ satisfies the condition that $(K(b,b')^2-K(b,b'))_{b,b'}$ is a positive semidefinite matrix. So does the Gromov form associated to $\psi$ here. It follows that $\psi$ verifies the $\Ga_2$-criterion. See the proof of \cite[Proposition 5.13]{JZ12} for details of the argument.
\end{exam}

The previous examples are all infinite groups. There are also interesting examples in finite groups.
\begin{exam}[Walsh systems]
Let $\zz_n$ denote the finite cyclic group and $\de_{x,y}$ denote the Kronecker delta function.
 Let $G= \zz_n^m=\zz_n\times\cdots\times \zz_n$ be the cartesian product of $m$ copies of $\zz_n$ with cn-length function
\[
\psi(x_1,\cdots,x_m)= m-\de_{x_1,0}-\cdots-\de_{x_m,0}, \quad (x_1,\cdots,x_m)\in \zz_n^m.
\]
It was shown in \cite[Proposition 5.12]{JZ12} that $\Ga_2-\frac{n+2}{2n}\Ga$ is a positive semidefinite form in $LG$. Thus \eqref{pcr3} follows. When $n=2$, this recovers Efraim and Lust-Piquard's Poincar\'e type inequalities for Walsh systems \cite{ELP}.
\end{exam}
\begin{exam}[Matrix algebras]
Let $H_3(\zz_n)$ be the discrete Heisenberg group over $\zz_n$. The multiplication is given by \eqref{hemul}. It was shown in \cite[Proposition 5.13]{JZ12} that $\Ga_2-\frac{n+2}{2n}\Ga$ is a positive semidefinite form in $L(H_3(\zz_n))$, the group von Neumann algebra of $H_3(\zz_n)$, where the semigroup $T_t$ acting on $L(H_3(\zz_n))$ is generated by the cn-length function $\psi(a,b,c)=2-\de_{b,0}-\de_{c,0}$ for $(a,b,c)\in H_3(\zz_n)$. Moreover, the $n^2$-dimensional matrix algebra $M_n$ can be embedded in $L(H_3(\zz_n))$ such that $T_t$ is invariant restricted to $M_n$. In this way, $\Ga_2-\frac{n+2}{2n}\Ga$ restricted to $M_n$ is a positive semidefinite form in $M_n$. The semigroup restricted to $M_n$ is given explicitly by
\[
T_t(v_cu_b)=e^{-t\psi(b,c)}(v_cu_b),
\]
where $\psi(b,c)=2-\de_{b,0}-\de_{c,0}$, $b,c\in\zz_n$ and
\[
v_k (e_j)=e_{j+k}, \quad u_k=\sum_{j=1}^n e^{2\pi i k(j-1)/n}\otimes e_{j,j}.
\]
Here $(e_j)$ is a basis of $\cz^n$ and $(e_{j,k})$ is the matrix unit of $M_n$. $T_t$ extends to a semigroup acting on $M_n$ because $M_n=\{v_cu_b:b,c\in\zz_n\}''$. Therefore $M_n$ satisfies \eqref{pcr3}.
\end{exam}
The previous two examples are based on the cn-length function $\psi(k)=1-\de_{k,0}$ on $\zz_n$, which gives $\Ga_2\ge\frac{n+2}{2n}\Ga$ in $L(\zz_n)$; see \cite[Section 5.3]{JZ12}. This 1-cocycle is important because it gives the number operator in the Walsh system. Another natural choice is the word length function, which is a basic notion in geometric group theory.

\begin{exam}[Word length on $\zz_n$]
Since one may embed $\zz_n$ to $\zz_{2n}$, we always assume $n$ is an even integer in this example. Consider the word length of $k\in \zz_n$ in the Cayley graph of $\zz_n$ given by $\psi(k)=\min\{k, n-k\}$. It is known that $\psi$ is conditionally negative; see \cite{JPPP}. One can also show this fact from the following explicit construction of $1$-cocycles. Let $(e_i)_{i=1}^{n/2}$ be an orthonormal basis of $\rz^{n/2}$. Define $b: \zz_n\to \rz^{n/2}$ to be
\[
b(k)=\begin{cases}
0, \quad k=0,\\
\sum_{i=1}^k e_i, \quad k=1,\cdots, n/2,\\
\sum_{i=k-n/2+1}^{n/2} e_i, \quad k=n/2+1, \cdots, n-1,
  \end{cases}
\]
and $\al: \zz_n\to O(\rz^{n/2})$ given by $\al_1(e_j)=e_{j+1}$ for $j=1,\cdots, n/2-1$ and $\al_1(e_{n/2})=-e_1$. It can be checked that $b$ is a 1-cocycle into the representation $(\al, \rz^{n/2})$ and $\psi(k)=\|b(k)\|^2$. It follows that the Gromov form $K$ is positive semidefinite. We will show that $[K(i,j)^2-K(i,j)]_{i,j=1}^{n-1}$ is a positive semidefinite matrix. By \cite[Lemma 5.3]{JZ12}, we have the following result (compare it with $\Ga_2\ge\frac{n+2}{2n}\Ga$ for the other choice of $\psi$).
\begin{prop}
  $\Ga_2\ge \Ga$ in $L(\zz_n)$.
\end{prop}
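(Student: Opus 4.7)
The plan is to invoke \cite{JZ12}*{Lemma 4.3}, which translates the operator inequality $\Ga_2\ge\Ga$ on $L(\zz_n)$ into the scalar statement that $[K(i,j)^2-K(i,j)]_{i,j=1}^{n-1}$ is a positive semidefinite matrix. The paragraph preceding the proposition has already reduced the problem to this point, so the only remaining task is to verify PSD-ness of this $(n-1)\times(n-1)$ matrix.

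The key observation exploits the explicit cocycle $b$ displayed above: a direct inspection of the three cases in its definition shows that, in the orthonormal basis $(e_i)_{i=1}^{n/2}$, every coordinate of each $b(k)$ lies in $\{0,1\}$. Writing $b(i)_\ell$ for the $\ell$-th coordinate, this $\{0,1\}$-valuedness yields the crucial collapse $b(i)_\ell^2=b(i)_\ell$. Combined with the standard identity $K(i,j)=\lge b(i),b(j)\rge=\sum_\ell b(i)_\ell b(j)_\ell$ (which follows from $\psi(g)=\|b(g)\|^2$, the cocycle law $b(gh)=b(g)+\al_g(b(h))$, and the orthogonality of $\al$), the square expands as
\[
K(i,j)^2 \;=\; \sum_{k,\ell} b(i)_k b(i)_\ell b(j)_k b(j)_\ell \;=\; \sum_k b(i)_k b(j)_k+\sum_{k\ne\ell}b(i)_k b(i)_\ell b(j)_k b(j)_\ell \;=\; K(i,j)+S(i,j),
\]
where $S(i,j):=\sum_{k\ne\ell}b(i)_k b(i)_\ell b(j)_k b(j)_\ell$. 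Setting $c(i):=\sum_{k\ne\ell}b(i)_k b(i)_\ell\, e_k\otimes e_\ell\in\rz^{n/2}\otimes\rz^{n/2}$, one immediately recognizes $S(i,j)=\lge c(i),c(j)\rge$; thus $[K(i,j)^2-K(i,j)]=[\lge c(i),c(j)\rge]_{i,j=1}^{n-1}$ is a Gram matrix of vectors in a Hilbert space, hence positive semidefinite.

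The only delicate point in the argument is the collapse of the diagonal contribution $\sum_k b(i)_k^2 b(j)_k^2$ back into the single factor $K(i,j)$, which genuinely uses the $\{0,1\}$-valuedness of this specific cocycle; Schur's product theorem alone would give only that $[K(i,j)^2]$ is PSD, with no mechanism to subtract the $K(i,j)$ piece. Once the PSD-ness of $[K(i,j)^2-K(i,j)]$ is established, \cite{JZ12}*{Lemma 4.3} is applied exactly as in the Walsh example earlier in this section to conclude $\Ga_2\ge\Ga$ in $L(\zz_n)$, completing the proof.
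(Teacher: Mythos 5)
Your proof is correct, but it takes a genuinely different route from the paper. Both arguments reduce, via \cite{JZ12}*{Lemma 4.3}, to showing that $[K(i,j)^2-K(i,j)]_{i,j=1}^{n-1}=K_n\bullet K_n-K_n$ is positive semidefinite; the difference lies entirely in how that matrix inequality is established. The paper proves the explicit decomposition $K_n\bullet K_n-K_n=2\sum_{\ell=1}^{n/2-1}\td K_{2\ell}$, where each $\td K_{2\ell}$ is a zero-padded copy of the Gromov form of $\zz_{2\ell}$ (hence PSD), and verifies this identity entrywise using the block formulas for $K_m$ and the two diagonal symmetries of $K_n$. You instead exploit the fact that the explicit cocycle $b$ has all coordinates in $\{0,1\}$, so that in the expansion of $K(i,j)^2=\lge b(i),b(j)\rge^2$ the diagonal terms collapse back to $K(i,j)$ and the off-diagonal terms form a Gram matrix $[\lge c(i),c(j)\rge]$ in $\rz^{n/2}\otimes\rz^{n/2}$; your identity $K(i,j)=\lge b(i),b(j)\rge$ is the standard consequence of $\psi(g)=\|b(g)\|^2$ together with the cocycle law and orthogonality of $\al$, and your remark that Schur's theorem alone cannot subtract the linear term is exactly right. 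Your argument is shorter and more conceptual, and it generalizes immediately: any cn-length function whose cocycle admits a realization with $\{0,1\}$ coordinates (a ``set-indicator'' cocycle) satisfies $K\bullet K\ge K$ and hence $\Ga_2\ge\Ga$. What the paper's computation buys in exchange is the explicit structural identity \eqref{recu}, which exhibits the deficit $K_n\bullet K_n-K_n$ as a positive combination of word-length Gromov forms of the smaller cyclic groups $\zz_{2\ell}$ --- information that is lost in the Gram-matrix argument. The only point you should make explicit is the verification (asserted in the paper) that $b$ is indeed a cocycle for $(\al,\rz^{n/2})$ with $\psi(k)=\|b(k)\|^2$, since your identity $K(i,j)=\lge b(i),b(j)\rge$ rests on it; this is a routine check.
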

We write $K_n$ for the Gromov form of $\zz_n$. Let us take away the trivial $K_n(0,i)$'s and view $K_n$ as an $(n-1)\times (n-1)$ matrix. We need to show $K_n\bullet K_n-K_n$ is positive definite. Here $K_n\bullet K_n$ denotes the Schur product. For all even integers $2\le m\le n-2$, we write $\td{K}_m$ for the $(n-1)\times (n-1)$ matrix obtained from enlarging the size of $K_m$ by adding surrounding $0$'s so that $K_m(m/2,m/2)=\td{K}_m(n/2,n/2)$. In other words,
\begin{equation}\label{enla}
  \td{K}_m(i,j)=K_m(i-\frac{n-m}2, j-\frac{n-m}2)
\end{equation}
whenever the right-hand side is well-defined. We claim that
\begin{equation}\label{recu}
  K_n\bullet  K_n - K_n =2\sum_{\ell=1}^{n/2-1} \td{K}_{2\ell}.
\end{equation}
Since each $\td{K}_m$ is positive semidefinite, \eqref{recu} will complete the proof.

In fact, note that $K_n$ satisfies the symmetric property
$$K_n(j,i)=K_n(i,j)=K(n-j,n-i).$$
This is equivalent to saying that $K_n$ is symmetric along the two diagonals. Therefore we only need to verify \eqref{recu} entrywise in the block $B_n:=\{(i,j): 1\le i\le j\le n-i\}$. In $B_m$ for general even $m$, we have
\[
K_m(i,j)=\begin{cases}
  i, \quad (i,j)\in B_m^1:=\{(i,j): 1\le i\le j\le \frac{m}2\},\\
  \frac{m}2-j+i, \quad (i,j)\in B_m^2:=\{(i,j): 1\le i\le \frac{m}2< j\le i+\frac{m}2-1\},\\
  0, \quad (i,j)\in B_m\setminus (B_m^1\cup B_m^2).
\end{cases}
\]
By our construction, \eqref{recu} is trivial if $K_n(i,j)=0$. For $(i,j)\in B_n^1$, $\td{K}_{2\ell}$ is nonzero only if $i\ge n/2-\ell+1, j\ge n/2-\ell+1$, and for these $(i,j)$'s, $(i-\frac{n}2+\ell,j-\frac{n}2+\ell)$'s are in the block $B_{2\ell}^1$ of $K_{2\ell}$. Hence, the right-hand side of \eqref{recu} is
\[
2\sum_{\ell=n/2-i+1}^{n/2-1} (i-\frac{n}2 +\ell) = {i(i-1)}= K_n(i,j)^2-K_n(i,j).
\]
For $(i,j)\in B_n^2$, $\td{K}_{2\ell}$ is nonzero only if $j-i\le \ell-1$, and for these $(i,j)$'s, $(i-\frac{n}2+\ell,j-\frac{n}2+\ell)$'s are in the block $B_{2\ell}^2$ of $K_{2\ell}$. Then the right-hand side of \eqref{recu} is
\[
2\sum_{\ell=j-i+1}^{n/2-1} (i-j+\ell)= \Big(\frac{n}2+i-j\Big)\Big(\frac{n}2+i-j-1\Big)=K_n(i,j)^2-K_n(i,j).
\]

By the same argument as in Example \ref{exhei}, we find a new 1-cocycle on $H_3(\zz_n)$ with cn-length function $\psi(a,b,c)=|b|+|c|$ for $(a,b,c)\in H_3(\zz_n)$, where $|b|=\min\{b,n-b\}$ is the word length. The semigroup generated by this 1-cocycle satisfies the $\Ga_2$-criterion. In particular, let $T_t(v_cu_b)=e^{-t(|c|+|b|)}v_cu_b$ act on $M_n$. By the same reasoning as for the proceeding example, $T_t$ is a new semigroup acting on the matrix algebras which is subgaussian.
\end{exam}

\section*{Acknowledgements}
We thank Renming Song for helpful conversations on examples and Victor de la Pe\~na for pointing out Remark \ref{decx} to us. We are also grateful to the referee for detailed suggestions and comments which have helped to improve this paper.

\bibliographystyle{plain}
\bibliography{pcr_ref}
\end{document}